\begin{document}

\title{The geometry of Gauss map and shape operator in simply isotropic and pseudo-isotropic spaces\thanks{This is a pre-print of an article published in Journal of Geometry (2019) \textbf{110}: 31. The final authenticated version is available online at: \url{https://doi.org/10.1007/s00022-019-0488-9}}}

\titlerunning{The geometry of Gauss map and shape operator in isotropic spaces}        

\author{Luiz C. B. da Silva}


\institute{L. C. B. da Silva \at Department of Physics of Complex Systems, Weizmann Institute of Science,\\
              Rehovot 7610001, Israel \\
              \email{luiz.da-silva@weizmann.ac.il}
}
\date{Received: 2018 / Accepted: 2019}

\maketitle

\begin{abstract}
In this work, we are interested in the differential geometry of surfaces in simply isotropic $\mathbb{I}^3$ and pseudo-isotropic $\mathbb{I}_{\mathrm{p}}^3$ spaces, which consists of the study of $\mathbb{R}^3$ equipped with a degenerate metric such as $\mathrm{d}s^2=\mathrm{d}x^2\pm\mathrm{d}y^2$. The investigation is based on previous results in the simply isotropic space [B. Pavkovi\'c, Glas. Mat. Ser. III \textbf{15}, 149 (1980); Rad JAZU \textbf{450}, 129 (1990)], which point to the possibility of introducing an isotropic Gauss map taking values on a unit sphere of parabolic type and of defining a shape operator from it, whose determinant and trace give the known relative Gaussian and mean curvatures, respectively. Based on the isotropic Gauss map, a new notion of connection is also introduced, the \emph{relative connection} (\emph{r-connection}, for short). We show that the new curvature tensor in both $\mathbb{I}^3$ and $\mathbb{I}_{\mathrm{p}}^3$ does not vanish identically and is directly related to the relative Gaussian curvature. We also compute the Gauss and Codazzi-Mainardi equations for the $r$-connection and show that $r$-geodesics on spheres of parabolic type are obtained via intersections with planes passing through their center (focus). Finally, we show that admissible pseudo-isotropic surfaces are timelike and that their shape operator may fail to be diagonalizable, in analogy to Lorentzian geometry. We also prove that the only totally umbilical surfaces in $\mathbb{I}_{\mathrm{p}}^3$ are planes and spheres of parabolic type and that, in contrast to the $r$-connection, the curvature tensor associated with the isotropic Levi-Civita connection vanishes identically for \emph{any} pseudo-isotropic surface, as also happens in simply isotropic space.

\keywords{Simply isotropic geometry \and pseudo-isotropic geometry \and Gauss map \and shape operator \and geodesic \and curvature}
\subclass{51N25 \and 53A35 \and 53A55 \and 53B05}
\end{abstract}

\section{Introduction}

The three dimensional ($3d$) simply isotropic $\mathbb{I}^3$ and pseudo-isotropic $\mathbb{I}_{\mathrm{p}}^3$ spaces are examples of $3d$ Cayley-Klein (CK) geometries \cite{daSilvaArXivIso2017,Giering1982,Sachs1987}. Basically, a CK geometry is the study of the properties in projective space $\mathbb{P}^3$ invariant by the action of the subgroup of projectivities that preserves the so-called \emph{absolute figure}. In our case of interest, the absolute figure is given by a plane at infinity and a degenerate quadric of index zero \cite{Sachs1990,StrubeckerSOA1941} or one \cite{daSilvaArXivIso2017}. 
From the differential viewpoint, we are essentially led to the study of $\mathbb{R}^3$ equipped with a degenerate metric of index 0 or 1: $\mathrm{d}s^2=\mathrm{d}x^2\pm\mathrm{d}y^2$. Besides its mathematical interest, see e.g. \cite{AydinGMJ2017,daSilvaArXivIso2017,Sachs1990}, isotropic geometry also finds applications in economics \cite{AydinTJM2016,chenKJM2014}, elasticity \cite{pottmann2009laguerre}, and in image processing \cite{koenderink2002image,PottmannCAGD1994}. The geometry of curves and surfaces in $\mathbb{I}^3$ was {began} by Strubecker in a series of papers \cite{StrubeckerSOA1941,StrubeckerMZ1942II,StrubeckerMZ1942III}, see also \cite{Sachs1990}, while the respective theory in $\mathbb{I}^3_{\mathrm{p}}$ has been recently initiated in \cite{AydinArXivPseuIso,daSilvaArXivIso2017}. 

It is known that the Riemann curvature tensor induced from the isotropic Levi-Civita connection vanishes for any surface in $\mathbb{I}^3$ \cite{Sachs1990}.  Despite that, the concept of a second fundamental form is not  trivial and allows the introduction of an alternative notion of Gaussian curvature $K$, named \emph{relative Gaussian curvature},  whose expression in local coordinates and interpretation via normal curvatures are analogous to the {Euclidean versions}. Amazingly, $K$ is expressed as the ratio between the area of a region under the (parabolic) spherical image and the isotropic area on the surface \cite{Sachs1990}. Finally, in the 1980s \cite{PavkovicGM1980} Pavkovi\'c interpreted $K$ in terms of a shape operator defined with respect to a unit sphere of parabolic type. These results suggest that the relative Gaussian curvature is a proper substitute for the intrinsic one. However, to the best of our knowledge, a possible relation with a curvature tensor has not been investigated yet.

In this work we push the results in \cite{PavkovicGM1980,PavkovicRadJAZU1990} further and extend them to $\mathbb{I}_{\mathrm{p}}^3$. After preliminaries in Sect. 2, a Gauss map in both $\mathbb{I}^3$ and $\mathbb{I}_{\mathrm{p}}^3$ taking values on a unit sphere of parabolic type, but slightly distinct from that of \cite{PavkovicGM1980}, is introduced in Sect. 3, from which a shape operator and the relative Gaussian and mean curvatures are defined. Following \cite{PavkovicRadJAZU1990}, in Sects. 4 and 5,  a new connection, the \emph{relative connection} (\emph{r-connection}), is introduced and we show that the respective curvature tensor does not vanish identically and is related to the relative Gaussian curvature. We compute the respective Gauss and Codazzi-Mainardi equations as well. Relative geodesics are defined as autoparallel curves of the $r$-connection and, as an example, we show that $r$-geodesics on spheres of parabolic type are obtained by intersections with planes passing through their center (focus). In Sect. 5 we turn to surfaces in $\mathbb{I}_{\mathrm{p}}^3$, where we first establish some elementary results. We prove that every (admissible) surface is timelike, show that their shape operator may fail to be diagonalizable, and prove that totally umbilical surfaces in $\mathbb{I}_{\mathrm{p}}^3$ are planes and spheres of parabolic type only. Finally, we show that as also happens in $\mathbb{I}^3$, the curvature tensor associated with the isotropic Levi-Civita connection vanishes identically for \emph{any} surface in $\mathbb{I}_{\mathrm{p}}^3$, in contrast to what is verified for the $r$-connection.

In the remaining of this work we shall use the Einstein  convention of summing on repeated indexes, e.g., $\Gamma_{ij}^k\mathbf{x}_{k}:=\sum_{k=1}^2\Gamma_{ij}^{k}\mathbf{x}_{k}$.

\section{Preliminaries}

In the spirit of Klein's Erlangen Program, the simply isotropic $\mathbb{I}^3$ and pseudo-isotropic $\mathbb{I}_{\mathrm{p}}^3$ geometries are the study of those properties in $\mathbb{R}^3$ invariant by the action of the 6-parameter groups $\mathcal{B}_6$ \cite{Sachs1990,StrubeckerSOA1941} and $\mathcal{B}_6^{\mathrm{p}}$ \cite{daSilvaArXivIso2017} given, respectively, by
\begin{equation}
\left\{
\begin{array}{ccc}
\bar{x} & = & a + x\,\cos\phi-y\,\sin\phi\\ 
\bar{y} & = & b + x\,\sin\phi+y\,\cos\phi\\
\bar{z} & = & c + c_1x+c_2y+z\\
\end{array}
\right. \mbox{ and }\left\{
\begin{array}{ccc}
\bar{x} & = & a + x\,\cosh\phi+y\,\sinh\phi\\ 
\bar{y} & = & b + x\,\sinh\phi+y\,\cosh\phi\\
\bar{z} & = & c + c_1x+c_2y+z\\
\end{array}
\right.\,,\label{eq::SemiIsoGroupSB6}
\end{equation}
where, $a,b,c,c_1,c_2,\phi\in\mathbb{R}$. In other words, $\mathcal{B}_6$ and $\mathcal{B}_6^{\mathrm{p}}$ give our rigid motions.

Observe that on the $xy$-plane these geometries look exactly like the Euclidean $\mathbb{E}^2$ and Lorentzian $\mathbb{E}_1^2$ plane geometries. The projection of a vector $\mathbf{u}=(u^1,u^2,u^3)\in \mathbb{I}^3$, or $\mathbb{I}_{\mathrm{p}}^3$, on the $xy$-plane is called the \emph{top view} of $\mathbf{u}$ and we shall denote it by $\widetilde{\mathbf{u}}=(u^1,u^2,0)$. The top view concept plays a fundamental role in the simply and pseudo isotropic spaces, since the $z$-direction is preserved under the action of $\mathcal{B}_6$ and $\mathcal{B}_6^{\mathrm{p}}$. A line with this direction is called an \emph{isotropic line} and a plane containing an isotropic line is an \emph{isotropic plane}.

In addition, one may introduce a \emph{simply isotropic} and a \emph{pseudo-isotropic inner product} between two vectors $\mathbf{u}=(u^1,u^2,u^3)$ and $\mathbf{v}=(v^1,v^2,v^3)$ as
\begin{equation}
\langle\mathbf{u},\mathbf{v}\rangle_{z} = u^1v^1+u^2v^2
\mbox{ and }
\langle\mathbf{u},\mathbf{v}\rangle_{pz} = u^1v^1-u^2v^2,
\end{equation}
respectively\footnote{The index $z$ is here to emphasize that $z$ is the isotropic (degenerate) direction.}. These inner products induce a (semi) norm in a natural way: 
\begin{equation}
\Vert \mathbf{u}\Vert_z = \sqrt{\langle\mathbf{u},\mathbf{u}\rangle_z} = \Vert\widetilde{\mathbf{u}}\Vert
\mbox{ and }
\Vert \mathbf{u}\Vert_{pz} = \sqrt{\vert\langle\mathbf{u},\mathbf{u}\rangle_{pz}\vert} = \Vert\widetilde{\mathbf{u}}\Vert_1,
\end{equation}
respectively. Here, $\Vert\cdot\Vert$ and $\Vert\cdot\Vert_1$ are the Euclidean and Lorentzian norms {induced by
$
\langle\mathbf{u},\mathbf{v}\rangle=u^1v^1+u^2v^2+u^3v^3$ and $\langle\mathbf{u},\mathbf{v}\rangle_1=u^1v^1-u^2v^2+u^3v^3$, respectively, and whose corresponding vector products are $\times$ and $\times_1$: notice that
$\mathbf{u}\times_1\mathbf{v}=(u^2v^3-u^3v^2,u^1v^3-u^3v^1,u^1v^2-u^2v^1)$.}

Finally, since the isotropic metrics are degenerate, the distance from a point $\mathbf{u}=(u^1,u^2,u^3)$ to $\mathbf{v}=(u^1,u^2,v^3)$ is zero {since $\tilde{\mathbf{u}}=\tilde{\mathbf{v}}$. In such cases, one may define a co-metric by using the codistance $\mathrm{cd}(\mathbf{u},\mathbf{v})=\vert v^3-u^3\vert.$}

\begin{remark}
The codistance $\mathrm{cd}(\cdot,\cdot)$ is a secondary concept and it is invariant by $\mathcal{B}_6$, or $\mathcal{B}_6^{\mathrm{p}}$, only when applied to isotropic vectors \cite{daSilvaArXivIso2017,Sachs1990}. One should not see it as part of the definition of a (pseudo) isotropic distance. Indeed, the function { $G$ defined as $G(\mathbf{u},\mathbf{v})=
\langle\mathbf{u},\mathbf{v}\rangle_z$ if  $\tilde{\mathbf{u}}\not=0$ or $\tilde{\mathbf{v}}\not=0$, and as
$G(\mathbf{u},\mathbf{v})=u^3v^3$ if $\tilde{\mathbf{u}}=\tilde{\mathbf{v}}=0$,
 is not bilinear and thus \emph{can not} be a metric: e.g.,} $G((1,0,1)+(0,0,1),(0,0,1))=0$, but $G((1,0,1),(0,0,1))+G(0,0,1),(0,0,1))=1\not=0$. 
\end{remark}

\begin{remark}
{If instead of $x_0=x_1=0$ and $x_0=x_2=0$, we choose the pair of lines $x_0=x_1\pm\, x_2=0$  for the pseudo-isotropic absolute figure}, then we obtain a different group of pseudo-isotropic rigid motions \cite{daSilvaArXivIso2017}, which coincides with the choice made in the classical literature (see the following remark). These groups however lead to the same geometry and can be related by a convenient coordinate change on the top view plane \cite{daSilvaArXivIso2017}. 
\end{remark}

\begin{remark}[Notation and terminology]
In \cite{StrubeckerCrelle1938,StrubeckerJDMV1938} Strubecker used the metric $\mathrm{d}s^2=\mathrm{d}x\,\mathrm{d}y$ and denoted the corresponding geometry by $I_3$. Notice this metric is equivalent to $\mathrm{d}s^2=\mathrm{d}x^2-\mathrm{d}y^2$ \cite{daSilvaArXivIso2017}, Subsect. 6.1. Here, the lines in the absolute figure are $f_1:\,x_0=x_1=0$ and $f_2:\,x_0=x_2=0$. In \cite{StrubeckerSOA1941}, however, Strubecker started to consider the two intersecting lines as being $f_{1,2}:x_0=x_1\pm\mathrm{i}x_2=0$, which leads to the distinct metric $\mathrm{d}s^2=\mathrm{d}x^2+\mathrm{d}y^2$, but continues to call the space isotropic and keeps denoting it by $I_3$. Around the 1930s, Lense seems to be the first to pay more attention to the ``degree of isotropy'' \cite{LenseMA1939}: e.g., when introducing the doubly isotropic space, Brauner based his terminology on Lense's work \cite{BraunerCrelle1966}. In \cite{SachsGD1989} Sachs denoted the pseudo-isotropic space, which comes from Strubecker's first choice, by $\tilde{I}_3^{(1)}$ and this same notation were employed by Husty and R{\"o}schel five years earlier \cite{HustyCMSJB1984}. In his book, however, Sachs denoted the pseudo-isotropic space by $I_3^{(1)P}$ \cite{Sachs1990} and this notation were also used recently by M\'esz\'aros \cite{MeszarosMP2007}.  In our work, for the ease of notation, we write $\mathbb{I}^3$ for the simply isotropic space and $\mathbb{I}_{\mathrm{p}}^3$ for the pseudo-isotropic space, since modern geometry texts usually put the dimension as an upper index.
\end{remark}

\subsection{Spheres in {isotropic spaces}}

In the following sections we shall use spheres of parabolic type in order to define a Gauss map and a shape operator for surfaces in isotropic spaces. \emph{Simply isotropic} and \emph{pseudo-isotropic spheres} {are defined} as connected and irreducible surfaces of degree 2 given by the 4-parameter families 
\begin{equation}
{(x^2\pm y^2)+2c_1x+2c_2y+2c_3z+c_4=0,\,c_i\in\mathbb{R},}\label{eq::IsoSpheres}
\end{equation}
{where the sign is $+$ in $\mathbb{I}^3$ and $-$ in $\mathbb{I}_{\mathrm{p}}^3$ \cite{daSilvaArXivIso2017,Sachs1990}.}  In addition, up to a rigid motion, we can express an isotropic sphere in one of the two normal forms below:
\newline
1. \emph{Spheres of parabolic type}: 
\begin{equation}
{z = \frac{1}{2p}(x^2\pm y^2)-\frac{p}{2}\,\mbox{ with }\,p\not=0,} \label{eq::IsoParSphere}
\end{equation}
{where the sign is $+$ in $\mathbb{I}^3$ and $-$ in $\mathbb{I}_{\mathrm{p}}^3$; and}
\newline
2. \emph{Spheres of cylindrical type}: 
\begin{equation}
{\left\{
\begin{array}{lc}
x^2+y^2=r^2 & \mbox{ in }\mathbb{I}^3\\
x^2-y^2=\pm\, r^2 & \mbox{ in }\mathbb{I}_{\mathrm{p}}^3\\
\end{array}
\right.,\mbox{ with }\,r>0.}
\end{equation}

\begin{remark}
The quantities $p$ and $r$ are isotropic invariants. Moreover, spheres of cylindrical type are precisely the set of points equidistant from a given center\footnote{Observe that the center $P$ is not uniquely defined since any other point $Q$ with the same top view as $P$, i.e., $\tilde{Q}=\tilde{P}$, is also a center.}. However, they do not constitute ``good'' surfaces in isotropic geometry, since their tangent planes are isotropic, i.e., in the terminology of the following section, they are not admissible surfaces. On the other hand, spheres of parabolic type {are understood to be centered at their focus. They} are admissible surfaces, have constant Gaussian and mean curvatures, $K=\frac{1}{p^2}$ and $H=\frac{1}{p}$, and {are the only totally umbilical surfaces in addition to planes} (Props. \ref{prop::GaussMeanCurvParabSph} and \ref{prop::SphrParAreTU}). 
\end{remark}

\section{Surfaces in isotropic spaces}

Now we discuss the differential geometry of surfaces in isotropic spaces. Further details concerning the geometry in $\mathbb{I}^3$ can be found in \cite{Sachs1990}. 
On the other hand, the study of surfaces in pseudo-isotropic geometry was initiated in Ref. \cite{AydinArXivPseuIso}.
\begin{definition}
Let $\mathbb{M}^3$ be $\mathbb{I}^3$ or $\mathbb{I}_{\mathrm{p}}^3$. Then, $\mathbf{x}:S\to\mathbb{M}^3$ is an \emph{admissible} surface if its tangent planes $T_qS$ are all non-isotropic.
\end{definition}

Let $g$ be the metric in $S\subset\mathbb{M}^3$ induced by the immersion $\mathbf{x}$, i.e., $g(u,v)=\langle\mathrm{d}\mathbf{x}(u),\mathrm{d}\mathbf{x}(v)\rangle_z$, or $g(u,v)=\langle\mathrm{d}\mathbf{x}(u),\mathrm{d}\mathbf{x}(v)\rangle_{pz}$. In a local parameterization $(u^1,u^2)\in\mathcal{U}\subseteq\mathbb{R}^2\mapsto\mathbf{x}(u^1,u^2)\in S$, the coefficients of the first fundamental form $\mathrm{I}$ are $g_{ij}=\langle\mathbf{x}_i,\mathbf{x}_j\rangle_z$, or $g_{ij}=\langle\mathbf{x}_i,\mathbf{x}_j\rangle_{pz}$, where $\mathbf{x}_i=\partial \mathbf{x}/\partial u^i$. We shall see in Prop. \ref{prop::PseudoIsoSurfAreTimelike} that for any admissible surface in $\mathbb{I}^3_{\mathrm{p}}$ the induced metric is non-degenerate with index 1, i.e., every surface is timelike. In addition, any admissible surface $\mathbf{x}(u^1,u^2)=(x^1(u^1,u^2),x^2(u^1,u^2),x^3(u^1,u^2))$ satisfies 
\begin{equation}
\frac{\partial(x^1,x^2)}{\partial(u^1,u^2)}=(x_1^1x_2^2-x_2^1x_1^2)\not=0,
\end{equation}
and, {then, can be parameterized as a graph, known as the \emph{normal form}:} 
\begin{equation}\label{eq::1stFundFormNormalPar}
{\mathbf{x}(u^1,u^2)=(u^1,u^2,f(u^1,u^2))\Rightarrow
\left\{
\begin{array}{cc}
\mathrm{I} = (\mathrm{d}u^1)^2+(\mathrm{d}u^2)^2 &\mbox{ in }\mathbb{I}^3\\
\mathrm{I}=(\mathrm{d}u^1)^2-(\mathrm{d}u^2)^2 &\mbox{ in }\mathbb{I}^3_{\mathrm{p}}\\
\end{array}
\right..}
\end{equation}

\subsection{Isotropic Gauss map and shape operator}

Denoting $\mathbf{x}=(x^1,x^2,x^3)$ and $\mathbf{x}_i=(x_i^1,x_i^2,x_i^3)$, we   introduce the notations
\begin{equation}
X=\left(
\begin{array}{ccc}
x_1^1 & x_1^2 & x_1^3\\[3pt]
x_2^1 & x_2^2 & x_2^3\\
\end{array}
\right),\, X_{ij}=\det\left(
\begin{array}{cc}
x_1^i & x_1^j \\[3pt]
x_2^i & x_2^j \\
\end{array}
\right).\label{def::DefXandXij}
\end{equation}
It follows that $\det(g_{ij}){=\Vert\tilde{\mathbf{x}}_1\times\tilde{\mathbf{x}}_2\Vert^2}=(X_{12})^2>0$ for $\mathbb{M}^3=\mathbb{I}^3$ and $\det(g_{ij})={=-\Vert\tilde{\mathbf{x}}_1\times_1\tilde{\mathbf{x}}_2\Vert_1^2}=-(X_{12})^2<0$ for $\mathbb{M}^3=\mathbb{I}_{\mathrm{p}}^3$. Notice that $X_{12}(q)=0$ would mean an isotropic tangent plane at $q$.

\begin{proposition}\label{prop::PseudoIsoSurfAreTimelike}
In  $\mathbb{I}_{\mathrm{p}}^3$ every admissible surface is timelike. In addition, there exists no spacelike surface and the only lightlike ones are  non-admissible.
\end{proposition}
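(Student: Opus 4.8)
The plan is to read everything off from the already-recorded identity $\det(g_{ij}) = -(X_{12})^2$, together with the elementary fact that the causal character of a surface at a point is dictated by the sign of the determinant of its first fundamental form. Recall that for a $2\times 2$ symmetric bilinear form, $\det(g_{ij}) < 0$ forces eigenvalues of opposite sign (index $1$, hence Lorentzian/timelike), $\det(g_{ij}) > 0$ together with a positive trace yields positive-definiteness (index $0$, spacelike), and $\det(g_{ij}) = 0$ is exactly degeneracy (lightlike). So the whole proposition reduces to controlling the sign of $\det(g_{ij})$.

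First I would establish, by a direct computation from $g_{ij} = \langle \mathbf{x}_i, \mathbf{x}_j\rangle_{pz} = x_i^1 x_j^1 - x_i^2 x_j^2$, the identity $\det(g_{ij}) = -(x_1^1 x_2^2 - x_1^2 x_2^1)^2 = -(X_{12})^2$. This is the Lorentzian analogue of Lagrange's identity and is purely algebraic: it holds for \emph{every} immersion, admissible or not, and is precisely the formula already noted after \eqref{def::DefXandXij}. Since it is a perfect square with a minus sign, we obtain the key inequality $\det(g_{ij}) = -(X_{12})^2 \le 0$ at every point, with equality if and only if $X_{12} = 0$.

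With this in hand the three assertions follow immediately. For an admissible surface one has $X_{12}(q) \neq 0$ for all $q$ (by definition, $X_{12}(q) = 0$ is exactly the condition for an isotropic tangent plane), so $\det(g_{ij}) < 0$ everywhere and the induced metric is non-degenerate of index $1$; thus every admissible surface is timelike. Because $\det(g_{ij}) \le 0$ holds \emph{unconditionally}, a positive-definite induced metric (which would require $\det(g_{ij}) > 0$) can never occur, so there exists no spacelike surface. Finally, a lightlike surface is by definition one whose induced metric is degenerate, i.e. $\det(g_{ij}) = 0$; by the identity this forces $X_{12} = 0$, equivalently the top-view vectors $\tilde{\mathbf{x}}_1, \tilde{\mathbf{x}}_2$ are linearly dependent and the tangent plane contains the isotropic direction, so the surface is non-admissible.

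I expect no serious obstacle: the content is carried entirely by the sign-definite identity $\det(g_{ij}) = -(X_{12})^2$. The only points deserving care are bookkeeping rather than conceptual, namely spelling out the correspondence ``$X_{12} = 0 \Leftrightarrow$ isotropic tangent plane'' via linear dependence of the top views, and being explicit that the causal character is a pointwise notion, so that the unconditional inequality $\det(g_{ij}) \le 0$ rules out spacelike behaviour at every point simultaneously. Passing to the normal-form parameterization \eqref{eq::1stFundFormNormalPar}, where $\mathrm{I} = (\mathrm{d}u^1)^2 - (\mathrm{d}u^2)^2$, provides an immediate sanity check that the signature is indeed $(+,-)$.
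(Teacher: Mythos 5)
Your proposal is correct and follows essentially the same route as the paper: both rest on the identity $\det(g_{ij})=-(X_{12})^2$ recorded after Eq.~(\ref{def::DefXandXij}), from which admissibility ($X_{12}\neq 0$) gives index $1$, the unconditional inequality $\det(g_{ij})\le 0$ excludes spacelike surfaces, and degeneracy is equated with $X_{12}=0$, i.e.\ non-admissibility. Your write-up merely makes explicit the algebraic verification of the identity and the equivalence between $X_{12}=0$ and an isotropic tangent plane, which the paper takes as already established.
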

\begin{proof}
In $\mathbb{I}_{\mathrm{p}}^3$ all admissible surfaces satisfy $\det { g_{ij} }=-(X_{12})^2<0$, which shows that they should be timelike, i.e., $g_{ij}$ is non-degenerate and has index 1. In particular, there is no spacelike surface. Finally, a non-admissible surface gives $\det g_{ij}=0$. Then,  a surface is lightlike if, and only if, it is non-admissible.
\qed
\end{proof}

Let $\Sigma^2$ be the unit sphere of parabolic type in $\mathbb{M}^3$ given by
\begin{equation}
\Sigma^2 = \{(x,y,z)\in\mathbb{M}^3:z = -\frac{1}{2}(x^2\pm y^2)+\frac{1}{2}\},\label{eq::UnitSimpIsoSphPar}
\end{equation}
where the sign is $+$ in $\mathbb{I}^3$ and $-$ in $\mathbb{I}_{\mathrm{p}}^3$.
The sphere $\Sigma^2$ will play a role in isotropic geometry analogous to that of $\mathbb{S}^2$ in $\mathbb{E}^3$ and of $\mathbb{S}_1^2$ in  $\mathbb{E}_1^3$. However, there is no isotropic counterpart of $\mathbb{H}_0^2$, since any surface in $\mathbb{I}_{\mathrm{p}}^3$ is timelike, Prop. \ref{prop::PseudoIsoSurfAreTimelike}.

\begin{definition}
Denoting by $\{\mathbf{e}_i\}_{i=1}^3$ the canonical basis of $\mathbb{R}^3$, where $\mathbf{e}_3$ is isotropic, { the \emph{isotropic Gauss map} $\xi:S\to\Sigma^2$ is defined in $\mathbb{M}^3=\mathbb{I}^3$ as}
\begin{equation}\label{def::IsoGaussMap}
\xi(u^1,u^2) = \frac{X_{23}}{X_{12}}\mathbf{e}_1+\frac{X_{31}}{X_{12}}\mathbf{e}_2+\frac{1}{2}\left\{1-\left[\left(\frac{X_{23}}{X_{12}}\right)^2+\left(\frac{X_{31}}{X_{12}}\right)^2\right]\right\}\mathbf{e}_3,
\end{equation}
 and in $\mathbb{M}^3=\mathbb{I}_{\mathrm{p}}^3$ as  
\begin{equation}\label{def::PseudoIsoGaussMap}
\xi^{\mathrm{p}}(u^1,u^2) = \frac{X_{23}}{X_{12}}\mathbf{e}_1+\frac{X_{13}}{X_{12}}\mathbf{e}_2+\frac{1}{2}\left\{1-\left[\left(\frac{X_{23}}{X_{12}}\right)^2-\left(\frac{X_{13}}{X_{12}}\right)^2\right]\right\}\mathbf{e}_3.
\end{equation}
\end{definition}

{This definition is inspired by Pavkovi\'c's findings in $\mathbb{I}^3$ \cite{PavkovicGM1980}. Here, however, we made a translation in $z$. This is crucial to guarantee that $\{\mathbf{x}_1,\mathbf{x}_2,\xi\}$ is linearly independent for \emph{any} admissible surface, see Eq. (\ref{eq::LIconditionForGaussMap}). The consequence is that the relative connection, to be introduced in Subsects. \ref{subsect::RelDGsimplyIsoSpc} and \ref{subsect::RelDGpseudoIsoSpc}, can be always properly defined. In addition, such a modification also implies that
 $\Vert\tilde{\xi}\Vert^2+\xi^3>0$, which avoids the appearance of singularities without geometric significance in, e.g., Eqs. (\ref{eq::RelationRHOijHij}) and \eqref{eq::IsoRdabc}. Indeed, if they were ``geometric", we would expect to see an effect manifest in the curvatures, but from Defs. \ref{def::ShapeOp} and \ref{def::IsoGaussAndMeanCurv}, the curvatures come from a derivative, which is invariant by translations, while the singularities disappear for a proper choice of constant. Translations in the definition of $\xi$ may be interpreted in terms of the problem of where to center isotropic spheres. Our choice corresponds to centering spheres at their focus: e.g., $\Sigma^2$ above is centered at the origin of the coordinate system. (Nicely, spherical geodesics come from intersections with planes passing through the center of the sphere, in analogy to $\mathbb{E}^3$ and $\mathbb{E}_1^3$, Examples \ref{Exe::r-GeodOnASphere} and \ref{Exe::r-GeodOnAPseudoIsoSphere}.)}  
\begin{remark}
{In  \cite{PavkovicRadJAZU1990}, Pavkovi\'c addresses the issue of introducing connections in $\mathbb{I}^3$ distinct from the usual one coming from the isotropic normal $\mathcal{N}=(0,0,1)$. There, it is discussed the relations between the second fundamental form and connection coefficients and also the corresponding geodesics. Notice that Pavkovi\'c also points to the need of a \emph{linearly independence condition} for the vector field $\mathbf{V}$ defining a connection, $\varphi\not=0$ in his Eq. (4). However, this is not met by the Gauss map $N_r$ in \cite{PavkovicGM1980}: $N_r=0$ whenever $T_qS$ is the $xy$-plane. Here,  this problem is corrected, as discussed above, and we go a step further in computing the corresponding curvature tensor and Gauss-Codazzi-Mainardi equations in both $\mathbb{I}^3$ and $\mathbb{I}_{\mathrm{p}}^3$. As an example, we describe spherical geodesics.}
\end{remark}

Note that the top view of $\xi$ coincides with that of
\begin{equation}
N_h=\frac{\mathbf{x}_1\times\mathbf{x}_2}{\Vert\widetilde{\mathbf{x}}_1\times\widetilde{\mathbf{x}}_2\Vert}=\frac{X_{23}}{X_{12}}\mathbf{e}_1+\frac{X_{31}}{X_{12}}\mathbf{e}_2+\mathbf{e}_3.\label{eq::DefIsoNh}
\end{equation}
Observe the similarity between $N_h$ and the Euclidean Gauss map $\xi_{eucl}=\mathbf{x}_1\times\mathbf{x}_2\,\Vert\mathbf{x}_1\times\mathbf{x}_2\Vert^{-1}$, but in comparison with $\xi$, the $z$-coordinate of $N_h$ was ``adjusted" to guarantee  $\xi\circ\mathbf{x}\in\Sigma^2$. (Here, we may assume that $X_{12}>0$ by exchanging $u^1\leftrightarrow u^2$ if necessary.) The same is true in $\mathbb{I}_{\mathrm{p}}^3$ as well:
\begin{equation}
N_h=\frac{\mathbf{x}_1\times_1\mathbf{x}_2}{\Vert\widetilde{\mathbf{x}}_1\times_1\widetilde{\mathbf{x}}_2\Vert_1}=\frac{X_{23}}{X_{12}}\mathbf{e}_1+\frac{X_{13}}{X_{12}}\mathbf{e}_2+\mathbf{e}_3\Rightarrow \widetilde{N}_h=\widetilde{\xi}^{\mathrm{p}}.\label{eq::DefPseudoNh}
\end{equation}

\begin{definition}\label{def::ShapeOp}
The {\emph{isotropic shape operator} $L_q$ (or \emph{Weingarten map})} is defined as
\begin{equation}
L_q(w_q)=\left\{
\begin{array}{ccc}
-D_{w_q}\,\xi &,& \forall\,w_q\in T_qS\subset T_q\mathbb{I}^3\\[3pt]
-D_{w_q}\,\xi^{\mathrm{p}} &,& \forall\,w_q\in T_qS\subset T_q\mathbb{I}_{\mathrm{p}}^3\\
\end{array}
\right.,
\end{equation}
where $D$ denotes the usual directional derivative in $\mathbb{R}^3$, i.e., $D_{w_q}\xi=(\xi\circ\gamma)'(0)$ if $\gamma$ is a curve with $\gamma(0)=q$ and $\gamma'(0)=w_q$. 
\end{definition}

Following similar steps to those of Ref. \cite{PavkovicGM1980}, it can be shown that 
\begin{equation}
L_q(\mathbf{x}_i)=-\frac{1}{X_{12}}\det\left(
\begin{array}{lr}
A_{i} & x_2^1\\[3pt]
B_{i} & x_2^2\\
\end{array}
\right)\mathbf{x}_1-\frac{1}{X_{12}}\det\left(
\begin{array}{lr}
x_1^1 & A_{i}\\[3pt]
x_1^2 & B_{i}\\
\end{array}
\right)\mathbf{x}_2,
\end{equation}
where $A=X_{23}/X_{12}$ and $B=X_{31}/X_{12}$ for $\mathbb{I}^3$ or $B=X_{13}/X_{12}$ for $\mathbb{I}_{\mathrm{p}}^3$. The equation above means that $L_q$ can be seen as a linear operator on $T_qS$. The planes $T_qS$ and $T_{\xi(q)}\Sigma^2$ are then parallel and can be canonically identified.

{Since our Gauss map differs from that of  \cite{PavkovicGM1980} only by a constant translation, see discussion and remark following Def. \ref{def::IsoGaussMap}, quantities such as the second fundamental form, normal curvature, and Gaussian and mean curvatures are the same. The \emph{isotropic second fundamental form} $\mathrm{II}$ is}
\begin{equation}
\forall\,u_q,v_q\in T_qS,\,\mathrm{II}(u_q,v_q) = \mathrm{I}(L_q(u_q),v_q).
\end{equation}
{whose coefficients  in local coordinates $\mathbf{x}:S\to\mathbb{M}^3$ can be written as $h_{ij}=\mathrm{II}(\mathbf{x}_i,\mathbf{x}_j)$. In terms of $N_h$, the coefficients of $\mathrm{II}$ are also written as}
\begin{equation}
h_{ij} = \frac{\det(\mathbf{x}_1,\mathbf{x}_2,\mathbf{x}_{ij})}{\det(\widetilde{\mathbf{x}}_1,\widetilde{\mathbf{x}}_2)}=\langle N_h,\mathbf{x}_{ij}\rangle.
\end{equation}
Finally, the \emph{normal curvature} $\kappa_n$ in the direction of a unit vector $w_q\in T_qS$ is 
\begin{equation}
\kappa_n(q,w_q)=\mathrm{II}(w_q,w_q)=\mathrm{I}(L_q(w_q),w_q).
\end{equation}
If $\gamma\subset S$ is a curve with $\gamma'(0)=w_q$, then the equation above is just the component of the acceleration $\gamma''$ in the direction of $N_h$, which is precisely the isotropic normal curvature \cite{Sachs1990}, p. 155. The same reasoning applies to $\mathbb{I}^3_{\mathrm{p}}$ \cite{AydinArXivPseuIso}.

\begin{definition}\label{def::IsoGaussAndMeanCurv}
The \emph{isotropic Gaussian} and \emph{mean curvatures} of an admissible surface $S\subset\mathbb{M}^3$ are respectively defined as
\begin{equation}
K(q) = \det(L_q)\mbox{ and }H(q)=\frac{1}{2}\mathrm{tr}(L_q).
\end{equation}
\end{definition}

If we write $L_q(\mathbf{x}_i)=-A_i^k\,\mathbf{x}_k$ in local coordinates, then
\begin{equation}
h_{ij} = \mathrm{I}(L_q(\mathbf{x}_i),\mathbf{x}_j)=-A_i^k\,\mathrm{I}(\mathbf{x}_k,\mathbf{x}_j) = -A_i^k\,g_{kj}.\label{eq::CoefShapeOpe}
\end{equation}
From this relation, it follows that $-A_i^k=g^{kj}h_{ji}$ and, therefore, we can write
\begin{equation}
K = \frac{h_{11}h_{22}-h_{12}^2}{g_{11}g_{22}-g_{12}^2}\,\mbox{ and }\,H = \frac{1}{2}\frac{g_{11}h_{22}-2g_{12}h_{12}+g_{22}h_{11}}{g_{11}g_{22}-g_{12}^2}\,.
\end{equation}
{Thus, the expressions for $K$ and $H$ are the same in both $\mathbb{I}^3$ and $\mathbb{I}_{\mathrm{p}}^3$, the only difference being that $\det g_{ij}$ is positive in $\mathbb{I}^3$ and negative in $\mathbb{I}_{\mathrm{p}}^3$.}

\begin{example}
Let $\mathbf{x}=(u^1,u^2,f(u^1,u^2))$ be an admissible surface parameterized in its normal form. Its first fundamental form is given by Eq. (\ref{eq::1stFundFormNormalPar}). On the other hand, we have $\mathbf{x}_i=\mathbf{e}_i+f_i\,\mathbf{e}_3$, $\mathbf{x}_{ij}=f_{ij}\,\mathbf{e}_3$, and then
\begin{enumerate}[(a)]
\item in $\mathbb{I}^3$, we find $\mathbf{x}_1\times\mathbf{x}_2=(-f_1,-f_2,1)$ and $\tilde{\mathbf{x}}_1\times\tilde{\mathbf{x}}_2=(0,0,1)$. Consequently, the second fundamental form is $\mathrm{II}=f_{11}(\mathrm{d}u^1)^2+2f_{12}\,\mathrm{d}u^1\mathrm{d}u^2+f_{22}(\mathrm{d}u^2)^2$ and the Gaussian and mean curvatures are respectively
\begin{equation}
K = f_{11}f_{22}-f_{12}^2\,\mbox{ and }\,H = \frac{f_{11}+f_{22}}{2}\,;
\end{equation}
\item in $\mathbb{I}_{\mathrm{p}}^3$, we find $\mathbf{x}_1\times_1\mathbf{x}_2=(-f_1,f_2,1)$ and $\tilde{\mathbf{x}}_1\times_1\tilde{\mathbf{x}}_2=(0,0,1)$. Consequently, the second fundamental form is $\mathrm{II}=f_{11}(\mathrm{d}u^1)^2+2f_{12}\mathrm{d}u^1\mathrm{d}u^2+f_{22}(\mathrm{d}u^2)^2$ and the Gaussian and mean curvatures are respectively
\begin{equation}
K = f_{12}^2-f_{11}f_{22}\,\mbox{ and }\,H = \frac{f_{11}-f_{22}}{2}\,.
\end{equation}
\end{enumerate}
\label{exe::KandHInNormalPar}
\end{example}
\begin{proposition}
Every admissible pseudo-isotropic minimal surface $S\subset\mathbb{I}_{\mathrm{p}}^3$, i.e., zero mean curvature surfaces, can be parameterized as
\begin{equation}
\mathbf{x}(u^1,u^2)=\left(u^1,u^2,f(u^1+u^2)+g(u^1-u^2)\right),
\end{equation}
where $f$ and $g$ are smooth real functions.
\end{proposition}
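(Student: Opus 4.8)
The plan is to reduce the minimality condition to a classical wave equation and then invoke d'Alembert's factorization. First I would recall that, by the discussion around Eq.~(\ref{eq::1stFundFormNormalPar}), every admissible surface in $\mathbb{I}_{\mathrm{p}}^3$ admits a normal-form parameterization $\mathbf{x}(u^1,u^2)=(u^1,u^2,f(u^1,u^2))$ for some smooth function $f$. Working in this parameterization, Example~\ref{exe::KandHInNormalPar}(b) gives the mean curvature explicitly as $H=(f_{11}-f_{22})/2$. Hence the minimality hypothesis $H\equiv 0$ is equivalent to the linear second-order PDE $f_{11}=f_{22}$, that is, the one-dimensional wave equation $f_{u^1u^1}-f_{u^2u^2}=0$.

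Next I would pass to characteristic coordinates. Setting $v=u^1+u^2$ and $w=u^1-u^2$, the chain rule yields $\partial_{u^1}=\partial_v+\partial_w$ and $\partial_{u^2}=\partial_v-\partial_w$, so that $f_{u^1u^1}-f_{u^2u^2}=4\,f_{vw}$. The wave equation therefore becomes $f_{vw}=0$. Integrating once in $w$ shows that $\partial_v f$ depends on $v$ alone, and integrating once more in $v$ shows that $f$ splits as a sum $f=F(v)+G(w)$ for smooth functions $F,G$ of a single variable. Rewriting in the original coordinates gives $f(u^1,u^2)=F(u^1+u^2)+G(u^1-u^2)$, which is precisely the claimed form after relabeling $F,G$ as $f,g$.

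There is no serious obstacle here: the only points requiring care are that the normal form is available for every admissible surface (guaranteed by Prop.~\ref{prop::PseudoIsoSurfAreTimelike} together with Eq.~(\ref{eq::1stFundFormNormalPar})), and that the sign structure of the pseudo-isotropic metric is exactly what turns the minimal-surface equation into the hyperbolic wave equation $f_{11}=f_{22}$ rather than the elliptic Laplace equation $f_{11}+f_{22}=0$ one would obtain in $\mathbb{I}^3$ from Example~\ref{exe::KandHInNormalPar}(a). It is this hyperbolicity that makes the global d'Alembert factorization into two one-variable functions possible, so the result is genuinely special to $\mathbb{I}_{\mathrm{p}}^3$.
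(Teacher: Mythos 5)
Your proof is correct and takes essentially the same route as the paper: write the surface in its normal form, observe that $H=0$ reduces to the wave equation $f_{11}-f_{22}=0$, and conclude by d'Alembert's factorization $f=F(u^1+u^2)+G(u^1-u^2)$. The only difference is that you spell out the characteristic-coordinate argument, whereas the paper simply invokes the known general solution of the homogeneous wave equation.
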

\begin{proof}
When written in its normal form $\mathbf{x}=(u^1,u^2,z(u^1,u^2))$, a minimal pseudo-isotropic surface is associated with the homogeneous wave equation $z_{11}-z_{22}=0$, whose general solution is of the form $z(u^1,u^2)=f(u^1+u^2)+g(u^1-u^2)$ for some smooth functions $f$ and $g$.
\qed
\end{proof}
\begin{remark}
The minimal surfaces in $\mathbb{I}^3$ are associated with the solution of the Laplace equation $z_{11}+z_{22}=0$. Consequently, $z$ should be the real or imaginary part of a holomorphic function, a fact that allows for a generic description of simply isotropic minimal surfaces \cite{Sachs1990}. In $\mathbb{I}_{\mathrm{p}}^3$, we just showed that every minimal surface is a special kind of an affine translation surface \cite{AydinIEJG2017,LiuPJA2013}.
\end{remark}

Spheres of parabolic type are graphs of quadratic polynomials $f=[(u^1)^2\pm(u^2)^2]/p+b_1u^1+b_2u^2+a_0$, from which it easily follows the

\begin{proposition}
Every sphere of parabolic type has constant Gaussian and mean curvatures equal to $K=1/p^2$ and $H=1/p$.\label{prop::GaussMeanCurvParabSph}
\end{proposition}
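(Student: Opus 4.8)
The plan is to reduce everything to the graph (normal form) representation and then read off the answer from the curvature formulas already established in Example \ref{exe::KandHInNormalPar}. Since $K=\det L_q$ and $H=\frac{1}{2}\mathrm{tr}\,L_q$ are built from the shape operator, they are isotropic invariants under $\mathcal{B}_6$ and $\mathcal{B}_6^{\mathrm{p}}$; hence there is no loss in evaluating them on any convenient representative of the congruence class of the sphere.

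First I would write a sphere of parabolic type as a graph $\mathbf{x}(u^1,u^2)=(u^1,u^2,f(u^1,u^2))$. By Eq. \eqref{eq::IsoParSphere} its normal form has $f=\frac{1}{2p}[(u^1)^2\pm(u^2)^2]-\frac{p}{2}$, and applying a general rigid motion only adds terms that are affine in $u^1,u^2$: the rotation (resp. hyperbolic rotation) of the top view preserves the quadratic form $(u^1)^2\pm(u^2)^2$, while the shear $\bar z=c+c_1x+c_2y+z$ contributes the linear and constant part. Thus a generic sphere of parabolic type is a graph of $f=\frac{1}{2p}[(u^1)^2\pm(u^2)^2]+b_1u^1+b_2u^2+a_0$.

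The key observation is that the curvatures in Example \ref{exe::KandHInNormalPar} depend only on the second derivatives $f_{ij}$, which are insensitive to the affine terms. Hence in every case $f_{11}=\frac{1}{p}$, $f_{22}=\pm\frac{1}{p}$, and $f_{12}=0$. Substituting into Example \ref{exe::KandHInNormalPar}: in $\mathbb{I}^3$ one gets $K=f_{11}f_{22}-f_{12}^2=\frac{1}{p^2}$ and $H=\frac{f_{11}+f_{22}}{2}=\frac{1}{p}$; in $\mathbb{I}_{\mathrm{p}}^3$ one gets $K=f_{12}^2-f_{11}f_{22}=0-\frac{1}{p}\left(-\frac{1}{p}\right)=\frac{1}{p^2}$ and $H=\frac{f_{11}-f_{22}}{2}=\frac{1}{p}$, which settles both spaces.

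There is essentially no obstacle here, as the statement is a direct corollary of Example \ref{exe::KandHInNormalPar}. The only point deserving a word of care is the reduction itself: one must justify that a sphere of parabolic type is always congruent to its graph normal form and that the admissible rigid motions leave the Hessian of $f$ with the stated diagonal structure $\mathrm{diag}(\tfrac{1}{p},\pm\tfrac{1}{p})$. Once this is granted, the proposition is a one-line substitution.
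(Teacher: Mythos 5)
Your proof is correct and follows essentially the same route as the paper: the paper likewise observes that spheres of parabolic type are graphs of quadratic polynomials $f=[(u^1)^2\pm(u^2)^2]/2p+b_1u^1+b_2u^2+a_0$ and reads off $K=1/p^2$, $H=1/p$ from the formulas of Example \ref{exe::KandHInNormalPar}. Your additional care in justifying the reduction (invariance of $K$ and $H$ under rigid motions, and the fact that motions in $\mathcal{B}_6$, $\mathcal{B}_6^{\mathrm{p}}$ only add affine terms to $f$, leaving the Hessian $\mathrm{diag}(\tfrac{1}{p},\pm\tfrac{1}{p})$) merely spells out what the paper leaves implicit.
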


Finally, it is worth mentioning that the isotropic curvature $K$  may be also named as the \emph{relative Gaussian curvature}, in opposition to the \emph{absolute Gaussian curvature} $K_a$, which is the intrinsic curvature of the first fundamental form. In the simply isotropic geometry, $K_a$ vanishes for \emph{every} surface \cite{Sachs1990}:
\begin{equation}
K_a=\frac{1}{g_{11}}\left(\Gamma^{2}_{11,2}-\Gamma_{12,1}^{2}+\Gamma_{11}^{\ell}\,\Gamma_{\ell2}^{2}-\Gamma_{12}^{\ell}\,\Gamma_{\ell1}^{2}\right)\equiv0,
\end{equation}
where, denoting by $\mathcal{N}=(0,0,1)$ the isotropic surface normal, the coefficients  $\Gamma_{ij}^k$ are the isotropic Christoffel  symbols defined through the relation
\begin{equation}
\mathbf{x}_{ij} = \Gamma_{ij}^k\mathbf{x}_k+h_{ij}\,\mathcal{N}.
\end{equation}
We shall see in the following that, using the isotropic Gauss map, it is possible to introduce a new connection in isotropic space in a way that the intrinsic curvature is no longer trivial and is directly related to the relative Gaussian curvature. Besides the interpretation of $K$ as  the determinant of the shape operator and its relation with a new notion of connection (to be obtained from this shape operator), let us mention that $K(q)$ can be seen as the ratio between the area of a region $\xi(U)$ in $\Sigma^2$ under the Gauss map and the area of $U\subset S$ in the limit $U\to \{q\}$ \cite{Sachs1990}, p. 178, in analogy with Euclidean geometry.

\subsection{Principal curvatures and totally umbilical surfaces}

If we fix $q\in S$, then we can see $\kappa_n$ as a function on the set of unit velocity vectors in the isotropic metric, i.e.,  $\kappa_n(q,\cdot): {\Sigma^1}\subset T_qS\to\mathbb{R}$. In $\mathbb{I}^3$, the unit {circle $\Sigma^1$} in $T_qS$ is compact and then $\kappa_n$ has both a maximum $\kappa_1$ and a minimum $\kappa_2$. Notice, if $T_qS$ were isotropic, $\Sigma^1$ would be a circle of parabolic type \cite{Sachs1987}, then non-compact. The extremal values of $\kappa_n$  are the \emph{principal curvatures} and they are precisely the eigenvalues of the shape operator. Therefore, it is possible to write $K=\kappa_1\kappa_2$ and $H=\frac{1}{2}(\kappa_1+\kappa_2)$. On the other hand, in $\mathbb{I}_{\mathrm{p}}^3$ the unit {circle $\Sigma^1$} is no longer compact (in coordinates, ${\Sigma^1}=\{x^2-y^2=\pm 1\}$) and then $\kappa_n$ may fail to have both a maximum and a minimum. As a consequence, the shape operator may fail to be diagonalizable (see Sect. \ref{sec::SurfTheoryPseudoIsoSpc}), as happens in Lorentzian geometry as well \cite{LopesIEJG2014}.

It may happen that all the directions in $T_qS$ are eigenvectors of the shape operator, which occurs precisely when $\mathrm{I}$ and $\mathrm{II}$ are multiple. Then, we have

\begin{definition}
A point $q$ where the first and second fundamental forms are proportional is said to be an \emph{umbilic point}, i.e., $q$ is umbilic when $
\mathrm{II}=\lambda\,\mathrm{I}$ at $q$. A surface whose every point is umbilic is said to be \emph{totally umbilical}.
\end{definition}

In $\mathbb{I}^3$, the only totally umbilical surfaces are spheres of parabolic type and non-isotropic planes \cite{Sachs1990}, p. 171. Analogously, in $\mathbb{I}_{\mathrm{p}}^3$ the following proposition is valid.
\begin{proposition}\label{prop::SphrParAreTU}
The only totally umbilical surfaces in $\mathbb{I}_{\mathrm{p}}^3$ are spheres of parabolic type and non-isotropic planes.
\end{proposition}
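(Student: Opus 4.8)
The plan is to reduce everything to the normal form parameterization $\mathbf{x}(u^1,u^2)=(u^1,u^2,f(u^1,u^2))$ provided by Eq. (\ref{eq::1stFundFormNormalPar}), in which $g_{11}=1$, $g_{12}=0$, $g_{22}=-1$, and, by Example \ref{exe::KandHInNormalPar}, the second fundamental form has coefficients $h_{11}=f_{11}$, $h_{12}=f_{12}$, $h_{22}=f_{22}$. The umbilic condition $\mathrm{II}=\lambda\,\mathrm{I}$ at each point then amounts to the pointwise system $f_{11}=\lambda$, $f_{12}=0$, $f_{22}=-\lambda$, where a priori $\lambda=\lambda(u^1,u^2)$ is merely a function on $S$. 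Everything hinges on upgrading this function to a constant.

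First I would use $f_{12}\equiv 0$ to separate variables, writing $f(u^1,u^2)=\phi(u^1)+\psi(u^2)$ for smooth one-variable functions $\phi$ and $\psi$. The surviving equations then read $\phi''(u^1)=\lambda=-\psi''(u^2)$: the outer members depend on disjoint variables, so each must equal a common constant $c$, whence $\lambda\equiv c$. Integrating $\phi''=c$ and $\psi''=-c$ yields $f(u^1,u^2)=\tfrac{c}{2}\big[(u^1)^2-(u^2)^2\big]+b_1u^1+b_2u^2+a_0$ with $a_0,b_1,b_2\in\mathbb{R}$.

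It then remains to interpret the two possibilities dictated by $c$. If $c=0$, the height function $f$ is affine and $S$ is a plane, necessarily non-isotropic since the normal form already encodes admissibility. If $c\neq 0$, setting $c=2/p$ reproduces exactly the graph $f=[(u^1)^2-(u^2)^2]/p+b_1u^1+b_2u^2+a_0$ of a sphere of parabolic type recorded just before Prop. \ref{prop::GaussMeanCurvParabSph}. These exhaust the totally umbilical surfaces and reproduce, in the pseudo-isotropic setting, the $\mathbb{I}^3$ classification cited from \cite{Sachs1990}.

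The step carrying the genuine content is the constancy of $\lambda$. In the Euclidean and Lorentzian settings this is the Codazzi--Mainardi argument; here the degeneracy of the metric collapses it to a separation-of-variables observation, with the opposite-sign relation $f_{11}=-f_{22}$ (a footprint of the index-$1$ signature of $\mathrm{I}$) doing the work that Codazzi would otherwise do. I expect no real obstacle beyond being careful that $\lambda$ is allowed to vary at the outset and only afterwards shown to be constant: conflating the two would trivialize the argument and silently discard the possibility of a non-constant umbilic factor.
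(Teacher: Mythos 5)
Your proof is correct and follows essentially the same route as the paper's: reduce to the normal form, read off $f_{12}=0$ and $f_{11}=-f_{22}$ from the umbilic condition, split $f$ into one-variable pieces, and conclude by the separation-of-variables argument that the common second derivative (your $\lambda$) is constant, yielding a plane or a sphere of parabolic type. The only difference is cosmetic: the paper eliminates $\lambda$ immediately and writes the decomposition as $F_1(u^1)-F_2(u^2)$, while you carry $\lambda$ explicitly before proving it constant.
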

\begin{proof}
Assume that the surface is given in its normal form, see Example \ref{exe::KandHInNormalPar}. In order to be totally umbilical, we must have $f_{12}=0$ and $f_{11}=-f_{22}$. From the first equation we deduce that $f(u^1,u^2)=F_1(u^1)-F_2(u^2)$ for some $F_i(u^i)$. On the other hand, $f_{11}=-f_{22}$ implies $F_1''(u^1)=F_2''(u^2)$ and, therefore, there exists a constant $2c_0$ such that $F_i''(u^i)=2c_0$. So, we have $F_i(u^i)=c_0(u^i)^2+b_iu^i+a_i$, for some constants $a_i,b_i$. In short, $S$ can be parameterized by 
\begin{equation}
\mathbf{x}(u^1,u^2)=(u^1,u^2,c_0\left[(u^1)^2-(u^2)^2\right]+b_1u^1-b_2u^2+a_1-a_2).
\end{equation}
Thus, $S$ is a sphere if $1/2p_0=c_0\not=0$ or a non-isotropic plane if $c_0=0$.
\qed
\end{proof}

\section{Surface theory in simply isotropic space}

\subsection{Relative differential geometry in simply isotropic space}
\label{subsect::RelDGsimplyIsoSpc}

{Following \cite{PavkovicRadJAZU1990},} we introduce a new connection in $\mathbb{I}^3$ whose coefficients $\Xi_{ij}^k$ come from
\begin{equation}
\mathbf{x}_{ij} = \Xi_{ij}^k\,\mathbf{x}_k+\rho_{ij}\,\xi\,.\label{def::RelCoeffXi}
\end{equation}
The coefficient $\rho_{ij}$ is unequivocally defined since $\{\mathbf{x}_1,\mathbf{x}_2,\xi\}$ is always a basis for $\mathbb{R}^3$. Indeed, using Eq. (\ref{def::DefXandXij}) and the definition of $\xi$ in Eq. (\ref{def::IsoGaussMap}), one has
\begin{equation}\label{eq::LIconditionForGaussMap}
\langle\mathbf{x}_1\times\mathbf{x}_2,\xi\rangle=\langle(X_{23},X_{31},X_{12}),\xi\rangle=\frac{(X_{23})^2+(X_{31})^2+(X_{12})^2}{2X_{12}}>0.
\end{equation}

In addition, the coefficient $\rho_{ij}$ satisfies
\begin{equation}
h_{ij} = \langle \mathbf{x}_{ij},N_h\rangle = \rho_{ij}\langle \xi,N_h\rangle = \rho_{ij}\langle \tilde{\xi}+\xi^3\mathbf{e}_3,\tilde{\xi}+\mathbf{e}_3\rangle = \rho_{ij}(\Vert\tilde{\xi}\Vert^2+\xi^3)
\end{equation}
and then
\begin{equation}\label{eq::RelationRHOijHij}
\rho_{ij} = \frac{h_{ij}}{\Vert\tilde{\xi}\Vert^2+\xi^3}\,\Rightarrow \rho_{ij}=\rho_{ji}\mbox{ and }\Xi_{ij}^k=\Xi_{ji}^k.
\end{equation}
Observe that $2(\Vert\tilde{\xi}\Vert^2+\xi^3)=[(X_{23})^2+(X_{31})^2+(X_{12})^2)](X_{12})^{-2}>0$ and, therefore, there is no singularity in the expression for $\rho_{ij}$.

\begin{proposition}
The coefficients $\Xi_{ij}^k$ relate to $\Gamma_{ij}^k$ according to 
\begin{equation}
\Xi_{ij}^k = \Gamma_{ij}^k + g^{k\ell}x_{\ell}^3\,\rho_{ij} {=\Gamma_{ij}^k + g^{k\ell}x_{\ell}^3\,\frac{h_{ij}}{\Vert\tilde{\xi}\Vert^2+\xi^3}.}
\end{equation}
\end{proposition}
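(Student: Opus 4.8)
The plan is to compare the two Gauss-type decompositions of the second derivatives $\mathbf{x}_{ij}$ and to isolate the tangential discrepancy between the isotropic Levi-Civita connection and the relative connection by projecting onto the tangent frame with the isotropic inner product. First I would subtract the Levi-Civita relation $\mathbf{x}_{ij}=\Gamma_{ij}^k\mathbf{x}_k+h_{ij}\,\mathcal{N}$ from Eq.~(\ref{def::RelCoeffXi}) to obtain
\[
(\Xi_{ij}^k-\Gamma_{ij}^k)\,\mathbf{x}_k = h_{ij}\,\mathcal{N}-\rho_{ij}\,\xi .
\]
The left-hand side is tangent to $S$, so the coefficients can be read off by pairing both sides with $\mathbf{x}_m$ under $\langle\cdot,\cdot\rangle_z$; the left-hand side then becomes $(\Xi_{ij}^k-\Gamma_{ij}^k)\,g_{km}$, since $g_{km}=\langle\mathbf{x}_k,\mathbf{x}_m\rangle_z$.

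The point is that the two ``normal'' vectors behave very differently under $\langle\cdot,\cdot\rangle_z$. Because $\mathcal{N}=(0,0,1)=\mathbf{e}_3$ has vanishing top view, the degenerate inner product annihilates it: $\langle\mathcal{N},\mathbf{x}_m\rangle_z=0$. The only genuine computation is $\langle\xi,\mathbf{x}_m\rangle_z$. Writing $A=X_{23}/X_{12}$ and $B=X_{31}/X_{12}$ so that $\tilde{\xi}=(A,B,0)$, only the top view contributes, and one must verify the algebraic identity $A\,x_m^1+B\,x_m^2=-x_m^3$. Expanding $A\,x_m^1+B\,x_m^2=(X_{23}x_m^1+X_{31}x_m^2)/X_{12}$ in terms of the minors and collecting terms, the cross terms cancel and the surviving factor is $-x_m^3\,X_{12}$, so that $\langle\xi,\mathbf{x}_m\rangle_z=-x_m^3$. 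I expect this cofactor cancellation to be the main obstacle, though it is a purely mechanical Laplace-expansion check rather than a conceptual one.

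With these two facts the projection of the right-hand side collapses, since $h_{ij}\langle\mathcal{N},\mathbf{x}_m\rangle_z-\rho_{ij}\langle\xi,\mathbf{x}_m\rangle_z=\rho_{ij}\,x_m^3$, and hence $(\Xi_{ij}^k-\Gamma_{ij}^k)\,g_{km}=\rho_{ij}\,x_m^3$. Finally I would raise the free index by contracting with the inverse metric $g^{m\ell}$, which exists because $\det(g_{ij})=(X_{12})^2>0$ on any admissible surface in $\mathbb{I}^3$; using $g_{km}g^{m\ell}=\delta_k^\ell$ yields $\Xi_{ij}^\ell-\Gamma_{ij}^\ell=g^{\ell m}x_m^3\,\rho_{ij}$. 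Relabelling the indices and substituting $\rho_{ij}=h_{ij}/(\Vert\tilde{\xi}\Vert^2+\xi^3)$ from Eq.~(\ref{eq::RelationRHOijHij}) then gives both stated expressions at once; the symmetry $\Xi_{ij}^k=\Xi_{ji}^k$ recorded in Eq.~(\ref{eq::RelationRHOijHij}) is automatically consistent with this formula.
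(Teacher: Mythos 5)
Your proof is correct, and it takes a genuinely different (and more direct) route than the paper's. The paper argues Koszul-style: it differentiates $g_{ab}=\langle\tilde{\mathbf{x}}_a,\tilde{\mathbf{x}}_b\rangle$, substitutes the relative decomposition (\ref{def::RelCoeffXi}), uses the identity $\langle\tilde{\xi},\tilde{\mathbf{x}}_a\rangle=-x_a^3$ (obtained there from $\langle N_h,w_q\rangle=0$), and then forms the combination $g_{i\ell,j}+g_{\ell j,i}-g_{ij,\ell}$ together with the symmetry $\Xi_{ij}^k=\Xi_{ji}^k$ to solve for $\Xi_{ij}^k$, finally identifying the Christoffel-formula term with $\Gamma_{ij}^k$. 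You instead subtract the two frame decompositions $\mathbf{x}_{ij}=\Gamma_{ij}^k\mathbf{x}_k+h_{ij}\,\mathcal{N}$ and $\mathbf{x}_{ij}=\Xi_{ij}^k\mathbf{x}_k+\rho_{ij}\,\xi$, project with $\langle\cdot,\mathbf{x}_m\rangle_z$, and invert the metric; your cofactor identity $X_{23}x_m^1+X_{31}x_m^2+X_{12}x_m^3=\det(\mathbf{x}_1,\mathbf{x}_2,\mathbf{x}_m)=0$ is exactly the paper's orthogonality $\langle N_h,\mathbf{x}_m\rangle=0$ in disguise, so the one nontrivial ingredient is shared. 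Your version is shorter and treats the two connections symmetrically; it works precisely because the paper \emph{defines} $\Gamma_{ij}^k$ by the $\mathcal{N}$-decomposition rather than by the metric Christoffel formula, and because admissibility gives both $\langle\mathcal{N},\mathbf{x}_m\rangle_z=0$ and $\det(g_{ij})=(X_{12})^2>0$, which you correctly invoke. What the paper's longer computation buys in exchange is the explicit intermediate formula $\Xi_{ij}^k=\frac{g^{k\ell}}{2}(g_{i\ell,j}+g_{\ell j,i}-g_{ij,\ell})+g^{k\ell}x_\ell^3\rho_{ij}$, i.e.\ it carries (implicitly) the standard fact that the $\mathcal{N}$-defined symbols $\Gamma_{ij}^k$ coincide with the Christoffel symbols of the induced degenerate metric --- a fact your argument neither needs nor delivers. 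Both arguments transfer essentially verbatim to $\mathbb{I}_{\mathrm{p}}^3$ away from lightlike points.
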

\begin{proof}
Assume the notation  $g_{ab,c}:=\partial g_{ab}/\partial u^c$. From $g_{ab}=\langle\tilde{\mathbf{x}}_a,\tilde{\mathbf{x}}_b\rangle$, we have
\begin{eqnarray}
g_{ab,c} & = & \langle\tilde{\mathbf{x}}_{ac},\tilde{\mathbf{x}}_b\rangle+\langle\tilde{\mathbf{x}}_a,\tilde{\mathbf{x}}_{bc}\rangle= \left\langle\Xi_{ac}^{d}\tilde{\mathbf{x}}_{d}+\rho_{ac}\tilde{\xi}\,,\,\tilde{\mathbf{x}}_b\right\rangle+\left\langle\tilde{\mathbf{x}}_a\,,\,\Xi_{bc}^{d}\tilde{\mathbf{x}}_{d}+\rho_{bc}\tilde{\xi}\right\rangle\nonumber\\
& = & \Xi_{ac}^{d}\langle\tilde{\mathbf{x}}_{d},\tilde{\mathbf{x}}_b\rangle+\Xi_{bc}^{d}\langle\tilde{\mathbf{x}}_a,\tilde{\mathbf{x}}_{d}\rangle+\rho_{ac}\langle\tilde{\xi},\tilde{\mathbf{x}}_b\rangle+\rho_{bc}\langle\tilde{\mathbf{x}}_a,\tilde{\xi}\rangle\,.
\end{eqnarray}
Now, {using that
    $0=\langle N_h,w_q\rangle=\langle\tilde{\xi}+\mathbf{e}_3,w_q\rangle=\langle\xi,w_q\rangle_z+w_q^3$, for any $w_q\in T_qS$, one finds}
\begin{equation}
g_{ab,c} = \Xi_{ac}^{d}\,g_{db}+\Xi_{bc}^{d}\,g_{ad}-\rho_{ac}\,x^3_b-\rho_{bc}\,x^3_a\,.
\end{equation}
Finally, computing $g_{i\ell,j}+g_{\ell j,i}-g_{ij,\ell}$ and using the symmetry $\Xi_{ab}^c=\Xi_{ba}^c$, 
\begin{equation}
\Xi_{ij}^k = \frac{g^{k\ell}}{2}\left(g_{i\ell,j}+g_{\ell j,i}-g_{ij,\ell}\right)+g^{k\ell}x^3_{\ell}\,\rho_{ij} = \Gamma_{ij}^k+g^{k\ell}x^3_{\ell}\,\rho_{ij}\,.
\end{equation}
\qed
\end{proof}

\begin{remark}
{An alternative proof for the proposition above can be provided by using Eq. (8) of \cite{PavkovicRadJAZU1990}. Indeed, first notice that $\xi=-x_j^3g^{jk}\mathbf{x}_k+(\xi^3+x_i^3x_j^3g^{ij})\mathbf{e}_3$. Then, expressing $g_{ij}$ and $g^{ij}$ using $x_k^{\ell}$, we find $g^{1\ell}x_{\ell}^3=X_{12}^{-2}(x_2^1X_{31}-x_2^2X_{23})$, $g^{2\ell}x_{\ell}^3=X_{12}^{-2}(-x_1^1X_{31}+x_1^2X_{23})$, and finally $x_i^3x_j^3g^{ij}=X_{12}^{-2}[(X_{31})^2+(X_{23})^2]$. The result then follows by substituting these expressions in Eq. (8) of \cite{PavkovicRadJAZU1990}.} 
\end{remark}

\begin{definition}[{\cite{PavkovicRadJAZU1990}}]
We say that a curve $\gamma:I\to S$ is a \emph{relative geodesic} (or \emph{$r$-geodesic}) if the acceleration vector $\gamma''$ is parallel to $\xi$.
\end{definition}

The coefficients $\Xi_{ij}^k$ define a covariant derivative $\nabla^{r}$ through
\begin{equation}
\nabla^{r}_{\mathbf{x}_i}\mathbf{x}_j = \Xi_{ij}^k\,\mathbf{x}_k\label{def::RelCovDiff}
\end{equation}
and then, for any $v_q=v^i\mathbf{x}_i,w_q=w^i\mathbf{x}_i\in T_qS$, one has
\begin{equation}
\nabla^{r}_{v_q}w_q = [v_q(w^k)+v^iw^j\,\Xi_{ij}^k]\,\mathbf{x}_k\,.
\end{equation}
We may refer to $\nabla^{r}$ as the \emph{relative connection} or \emph{r-connection}. Now, computing the intrinsic acceleration $\nabla^{r}_{\gamma'}\,\gamma'$ we may deduce the standard result below. 
\begin{proposition}
A curve $\gamma:I\to S$ is an r-geodesic if and only if
\begin{equation}
\nabla^{r}_{\gamma'}\,\gamma'=0\Leftrightarrow
\frac{\mathrm{d}^2u^k}{\mathrm{d}\,t^2}+\Xi_{ij}^k\,\frac{\mathrm{d}u^i}{\mathrm{d}t}\frac{\mathrm{d}u^j}{\mathrm{d}t}=0\,,\,k\in\{1,2\},
\end{equation}
where $\gamma(t)=\mathbf{x}(u^1(t),u^2(t))$ and $\mathbf{x}$ is a local parameterization of $S$.
\end{proposition}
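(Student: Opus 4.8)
The plan is to expand the ambient acceleration $\gamma''$ in the basis $\{\mathbf{x}_1,\mathbf{x}_2,\xi\}$ and then read off the two asserted equivalences directly. First I would write $\gamma(t)=\mathbf{x}(u^1(t),u^2(t))$ and differentiate twice by the chain rule, getting $\gamma'=\dot{u}^i\,\mathbf{x}_i$ and
\[
\gamma''=\ddot{u}^k\,\mathbf{x}_k+\dot{u}^i\dot{u}^j\,\mathbf{x}_{ij},
\]
where dots denote $\mathrm{d}/\mathrm{d}t$. Substituting the decomposition $\mathbf{x}_{ij}=\Xi_{ij}^k\,\mathbf{x}_k+\rho_{ij}\,\xi$ from Eq. (\ref{def::RelCoeffXi}) and collecting the tangential and $\xi$ parts yields
\[
\gamma''=\big(\ddot{u}^k+\Xi_{ij}^k\,\dot{u}^i\dot{u}^j\big)\mathbf{x}_k+\big(\rho_{ij}\,\dot{u}^i\dot{u}^j\big)\xi.
\]

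The second step is to identify the tangential bracket with $\nabla^{r}_{\gamma'}\gamma'$. Applying the formula $\nabla^{r}_{v_q}w_q=[v_q(w^k)+v^iw^j\,\Xi_{ij}^k]\mathbf{x}_k$ with $v_q=w_q=\gamma'$, so that $v^i=w^i=\dot{u}^i$ and $v_q(w^k)=\ddot{u}^k$ along $\gamma$, gives precisely $\nabla^{r}_{\gamma'}\gamma'=(\ddot{u}^k+\Xi_{ij}^k\,\dot{u}^i\dot{u}^j)\mathbf{x}_k$. Hence $\gamma''=\nabla^{r}_{\gamma'}\gamma'+(\rho_{ij}\,\dot{u}^i\dot{u}^j)\xi$, i.e.\ $\nabla^{r}_{\gamma'}\gamma'$ is exactly the component of the ambient acceleration $\gamma''$ tangent to $S$, while the remaining part is a multiple of $\xi$.

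Finally I would invoke the linear independence of $\{\mathbf{x}_1,\mathbf{x}_2,\xi\}$, which holds for every admissible surface by Eq. (\ref{eq::LIconditionForGaussMap}). Since these three vectors form a basis of $\mathbb{R}^3$, the vector $\gamma''$ is parallel to $\xi$ (the defining condition for an r-geodesic) if and only if both its $\mathbf{x}_1$- and $\mathbf{x}_2$-components vanish, that is, $\nabla^{r}_{\gamma'}\gamma'=0$; and because $\{\mathbf{x}_1,\mathbf{x}_2\}$ is itself independent, the latter holds if and only if each coefficient vanishes, giving $\ddot{u}^k+\Xi_{ij}^k\,\dot{u}^i\dot{u}^j=0$ for $k\in\{1,2\}$. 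Chaining these reads off both equivalences in the statement at once.

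There is no genuine obstacle here, as the result is the standard geodesic computation transported to the r-connection. The only points demanding care are that the directional-derivative term $v_q(w^k)$ in the connection formula, when evaluated along $\gamma$, collapses to the ordinary second derivative $\ddot{u}^k$ (because $w^k=\dot{u}^k$ is a function of $t$ alone on the curve), and that the splitting of $\gamma''$ into its $\mathbf{x}_k$- and $\xi$-parts is unique precisely owing to the linear independence condition (\ref{eq::LIconditionForGaussMap})—the very property that the translated Gauss map was introduced to guarantee.
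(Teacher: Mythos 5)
Your proposal is correct and follows exactly the computation the paper gestures at: expanding $\gamma''$ via the decomposition $\mathbf{x}_{ij}=\Xi_{ij}^k\,\mathbf{x}_k+\rho_{ij}\,\xi$, identifying the tangential part with $\nabla^{r}_{\gamma'}\gamma'$, and invoking the linear independence of $\{\mathbf{x}_1,\mathbf{x}_2,\xi\}$ from Eq.~(\ref{eq::LIconditionForGaussMap}). The paper leaves this as ``the standard result'' obtained by computing the intrinsic acceleration, so your write-up is simply a fleshed-out version of the same argument.
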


\begin{remark}
The relative connection $\Xi_{ij}^k$ is not metric with respect to the induced isotropic metric since it is degenerate, in which case it is possible to have more than one symmetric and metric connection \cite{VogelAM1965}. As a corollary, $r$-geodesics are not necessarily parameterized by arc-length, see example \ref{Exe::r-GeodOnASphere}.
\end{remark}

\begin{example}[r-geodesics on a plane]
Let $S$ be a non-isotropic plane. Clearly, any straight line $t\mapsto q+t\,\mathbf{u}$, $\mathbf{u}\in T_qS\cong S$, is an $r$-geodesic. By the existence and uniqueness theorem for ODE's, these are the only $r$-geodesics in $S$.
\qed
\end{example}

Geodesics according to the Levi-Civita connection are easy to find: they are the intersection with isotropic planes, since the length minimization property is defined with respect to $\mathrm{d}s^2=\mathrm{d}x^2\pm\mathrm{d}y^2$ on the top view plane {\cite{PavkovicGM1980,Sachs1990}}. On the other hand, the computation of $r$-geodesics is not so trivial.
\begin{example}[r-geodesics on a sphere of parabolic type]
\label{Exe::r-GeodOnASphere}
Let $S$ be the sphere of parabolic type $\Sigma^2(p)=\{z=\frac{p}{2}-\frac{1}{2p}(x^2+y^2)\}$ centered at the origin. In $\mathbb{E}^3$, it is known that the geodesics on a sphere can be obtained by intersecting it with planes passing through its center. Now, we show the same for $S=\Sigma^2(p)$. 

The intersection $S\cap\Pi_{a,b}=\{z=-ax-by\}$, $\Pi_{a,b}$ non-isotropic, is the curve
\begin{equation}
\gamma(t)=p\left(R\cos\theta(t)+a,R\sin\theta(t)+b,-a^2-b^2-R[a\cos\theta(t)+b\sin\theta(t)]\right)\,,
\end{equation}
where $R=\sqrt{1+a^2+b^2}$ (\footnote{Indeed, if $(x^*,y^*,z^*)\in\Pi_{a,b}\cap S$, then from $-ax^*-bx^*=p/2-[(x^*)^2+(y^*)^2]/2p$ we find $(x^*-ap)^2+(y^*-bp)^2=p^2(1+a^2+b^2)$.}). In order to have $\gamma''\parallel \gamma/p$, $\xi\circ\gamma=\gamma/p$, it is enough to find a function $\theta(t)$ such that $\gamma\times\gamma''=0$. This leads to 
\begin{equation}
\left[\theta''(R+a\cos\theta+b\sin\theta)-\theta'\,^2(a\sin\theta-b\cos\theta)\right](R\,a,R\,b,R)=0\,.\label{eqAux::CondAccGammaParallelXi}
\end{equation}
By writing the constants $a,b$ as $(a,b)=\rho(\cos\phi,\sin\phi)$, with $\rho=\sqrt{a^2+b^2}<R$, we find $R+a\cos\theta+b\sin\theta=R+\rho\cos(\theta-\phi)>0$ and then Eq. (\ref{eqAux::CondAccGammaParallelXi}) gives
\begin{equation}
\frac{\mathrm{d}^2\theta}{\mathrm{d}\,t^2}=\frac{\rho\sin(\theta-\phi)}{R+\rho\cos(\theta-\phi)}\left(\frac{\mathrm{d}\theta}{\mathrm{d}t}\right)^2.\label{eqAux::EDforTheta}
\end{equation}
Now, define $\Theta=\theta'$ and observe that $\theta''=\Theta'=\dot{\Theta}\,\theta'$, where a prime and a dot denote differentiation with respect to $t$ and $\theta$, respectively. Then, Eq. (\ref{eqAux::EDforTheta}) can be alternatively written as a first order differential equation
\begin{equation}
\frac{\mathrm{d}\Theta}{\mathrm{d}\,\theta}=\frac{\rho\sin(\theta-\phi)}{R+\rho\cos(\theta-\phi)}\,\Theta,\label{eqAux::EDforThetaAs1stODE}
\end{equation}
whose solution does exist and it is unique for any given initial condition. Thus, once we know a solution $F(\theta)$ of Eq. (\ref{eqAux::EDforThetaAs1stODE}), we can find $\theta(t)$ by solving $\theta'=F(\theta)$, for which it is also valid the existence and uniqueness theorem for ODE's. 

On the other hand, if $\Pi$ is an isotropic plane passing through the origin, then the intersection $\gamma=\Pi\cap S$ can be written as
\begin{equation}
\gamma(t) = \left(x(t),0,\frac{p}{2}-\frac{x^2(t)}{2p}\right),
\end{equation}
where we are assuming, without loss of generality, that $\Pi$ is the $xz$-plane. In order to have $\gamma''\parallel \gamma$, it is enough to find $x(t)$ with $\gamma\times\gamma''=0$, which leads to
\begin{equation}
\frac{\mathrm{d}^2x}{\mathrm{d}\,t^2}=-\frac{2x}{p^2+x^2}\left(\frac{\mathrm{d}x}{\mathrm{d}t}\right)^2\stackrel{X=x'}{\Longrightarrow}\frac{\mathrm{d}X}{\mathrm{d}\,x}=-\frac{2x}{p^2+x^2}X\,.
\end{equation}
By the same reasoning as before, it is possible to find a solution $x(t)$.

In short, the intersections of $S$ with  planes passing through its center can describe all the parameterized $r$-geodesics on a sphere of parabolic type.
\qed
\end{example}

\subsection{Gauss and Codazzi-Mainardi equations for the relative connection}
\label{subsec::RelativeGauss-Codazzi-MainardiEqs}

Let us exploit the equality $\mathbf{x}_{ab,c}=\mathbf{x}_{ac,b}$. We have
\begin{equation}
\mathbf{x}_{ab,c} = \left(\Xi_{ab,c}^{e}+\Xi_{ab}^{d}\,\Xi_{cd}^e-\rho_{ab}A_c^e\right)\mathbf{x}_e+\left(\rho_{ab,c}+\Xi_{ab}^d\,\rho_{cd}\right)\xi,
\end{equation}
where $\xi_c=-A_c^e\,\mathbf{x}_e$. From the coefficients of $\mathbf{x}_e$, we deduce the \emph{Gauss equation}
\begin{eqnarray}
\Xi_{ab,c}^e-\Xi_{ac,b}^e+\Xi_{ab}^d\Xi_{cd}^e-\Xi_{ac}^d\Xi_{bd}^e & = & (\rho_{ab}h_{cd}-\rho_{ac}h_{bd})g^{ed}\\ \label{eq::GaussEq_RhoH}
& = & (h_{ab}h_{cd}-h_{ac}h_{bd})\frac{g^{ed}}{\Vert\tilde{\xi}\Vert^2+\xi^3}\\ \label{eq::GaussEq_HH}
& = & (\Vert\tilde{\xi}\Vert^2+\xi^3)(\rho_{ab}\rho_{cd}-\rho_{ac}\rho_{bd})g^{ed}\,,\label{eq::GaussEq_RhoRho}
\end{eqnarray}
where we used Eq. (\ref{eq::CoefShapeOpe}). 
On the other hand, from the coefficient of $\xi$ in $\mathbf{x}_{ab,c}-\mathbf{x}_{ac,b}=0$, we deduce the \emph{Codazzi-Mainardi equation}
\begin{equation}
\rho_{ab,c}-\rho_{ac,b}+\Xi_{ab}^d\rho_{cd}-\Xi_{ac}^d\rho_{bd} = 0\,.\label{eq::GaussMainardi}
\end{equation}

Finally, let us introduce the \emph{r-curvature tensor} as
\begin{equation}
\mathcal{R}_{ijk}^{\ell} = \Xi_{ij,k}^{\ell}-\Xi_{ik,j}^{\ell}+\Xi_{ij}^s\Xi_{ks}^{\ell}-\Xi_{ik}^s\Xi_{js}^{\ell}\,.
\end{equation}
Now, using that $g_{fe}\,g^{ed}=\delta_f^d$ and making $(a,b,c)=(1,1,2)$ in the right-hand side of the Gauss equation Eq. (\ref{eq::GaussEq_HH}), we find 
\begin{equation}
g_{ef}\mathcal{R}_{abc}^d=\frac{g_{fe}\,g^{ed}}{\Vert\tilde{\xi}\Vert^2+\xi^3}(h_{ab}h_{cd}-h_{ac}h_{bd})=\frac{\delta_{f}^{d}}{\Vert\tilde{\xi}\Vert^2+\xi^3}(h_{ab}h_{cd}-h_{ac}h_{bd}),
\end{equation}
which implies
\begin{equation}\label{eq::IsoRdabc}
\mathcal{R}_{dabc}:=g_{ed}\mathcal{R}_{abc}^e=\frac{h_{ab}h_{cd}-h_{ac}h_{bd}}{\Vert\tilde{\xi}\Vert^2+\xi^3}\,.
\end{equation}
Then, we deduce that 
\begin{equation}
K =\frac{\det(h)}{\det(g)} = (\Vert\tilde{\xi}\Vert^2+\xi^3)\times\frac{\mathcal{R}_{2112}}{g_{11}g_{22}-(g_{12})^2},
\end{equation}
which, from $\mathbf{x}_{112}=\mathbf{x}_{121}$ and $\rho_{11}\,A_2^2-\rho_{12}\,A_1^2=(\Vert\tilde{\xi}\Vert^2+\xi^3)^{-1}g_{11}\det(L_q)$, can be alternatively rewritten as
\begin{equation}
K = \frac{\Vert\tilde{\xi}\Vert^2+\xi^3}{g_{11}}\left(\Xi^{2}_{11,2}-\Xi_{12,1}^{2}+\Xi_{11}^{\ell}\,\Xi_{\ell2}^{2}-\Xi_{12}^{\ell}\,\Xi_{\ell1}^{2}\right).
\end{equation}
Therefore, the relative Gaussian curvature only depends on the Gauss map $\xi$ and on the coefficients $\Xi_{ij}^k$ of the $r$-connection. This equation represents the Theorema Egregium for the relative Gaussian curvature according to the $r$-connection.

\section{Surface theory in pseudo-isotropic space}
\label{sec::SurfTheoryPseudoIsoSpc}

In pseudo-Euclidean geometry, depending on the properties of the induced metric, we may associate a \emph{causal character} with a surface. In $\mathbb{I}_{\mathrm{p}}^3$, any admissible surface 
is timelike (Prop. \ref{prop::PseudoIsoSurfAreTimelike}) and non-admissible surfaces are all lightlike.

Despite the fact that the shape operator $L_q$ is symmetric with respect to the induced metric,  $L_q$ may fail to be diagonalizable in the pseudo-isotropic space. This is in contrast with $\mathbb{I}^3$, where the shape operator is always diagonalizable since the induced metric is Riemannian. In $\mathbb{I}_{\mathrm{p}}^3$, the diagonalization of the shape operator depends on the existence of real
roots of its characteristic polynomial {$
C_{L_q}(\lambda) = \lambda^2-\mathrm{tr}(L_q)\,\lambda+\det(L_q)= \lambda^2-2H\,\lambda+K$.} The shape operator is diagonalizable if  $C_{L_q}$ has two distinct real roots, i.e., if $H^2-K>0$. However, if $H^2-K<0$, $L_q$ is not diagonalizable and if $H^2-K=0$, then $L_q$ may be diagonalizable or not. Finally, when $S$ is totally umbilical, $L_q$ is diagonalizable and $H^2-K=0$. Notice that we always have $H^2-K\geq0$ for surfaces in $\mathbb{I}^3$. 

\begin{remark}
Naturally, if we allow for imaginaries, all shape operators for which $H^2-K<0$ would be diagonalizable over $\mathbb{C}$. However, even with such an extension, shape operators with $H^2-K=0$ could remain not diagonalizable, as is the case of $L_q$ in example \ref{exe::SnotTU}. Here, we shall work over $\mathbb{R}$ only.
\end{remark}

\begin{example}[$H^2-K=0$, but $S$ not totally umbilical: $L_q$ not diagonalizable]\label{exe::SnotTU}
Let $S\subset\mathbb{I}_{\mathrm{p}}^3$ be the surface parameterized by {$
\mathbf{x}(u^1,u^2)=(u^1,u^1+b\,u^2,u^1u^2),\,b>0$.} 
Here $\mathbf{x}_1=(1,1,u^2)$, $\mathbf{x}_2=(0,b,u^1)$, $\mathbf{x}_1\times_1\mathbf{x}_2=(u^1-b\,u^2,u^1,b)$, and $S$ is admissible with metric
$\mathrm{I}=-b\,\mathrm{d}u^1\mathrm{d}u^2-b^2(\mathrm{d}u^2)^2\,.$ In addition, $\mathbf{x}_{11}=\mathbf{x}_{22}=\mathbf{0}$, $\mathbf{x}_{12}=(0,0,1)$, and then $\mathrm{II}=\mathrm{d}u^1\mathrm{d}u^2\,.$
The {shape operator and} the Gaussian and mean curvatures are
\begin{equation}
L_q=\left(
\begin{array}{cc}
    -\frac{1}{b} & 1 \\
    0 & -\frac{1}{b}\\
\end{array}
\right),\,K = \frac{1}{b^2}\,\mbox{ and }\,H=-\frac{1}{b}\Rightarrow H^2-K\equiv0\,.
\end{equation}
$L_q$ is not diagonalizable, since $V=[(1,0)_{\{\mathbf{x}_1,\mathbf{x}_2\}}]$, the eigenspace corresponding to the single eigenvalue $-1/b$, has dimension 1. Finally, despite that $H^2-K=0$, $S$ is not umbilical. In fact, $L_q$ is diagonalizable $\Leftrightarrow$ $S$ is umbilical. 
\qed
\end{example}
\begin{example}[$H^2-K<0$: $L_q$ non diagonalizable]
Let $S\subset\mathbb{I}_{\mathrm{p}}^3$ be the helicoidal surface {$\mathbf{x}(u^1,u^2)=(u^1\cosh(u^2),u^1\sinh(u^2),c\,u^2),\,c,u^1>0.$}
We have $\mathbf{x}_1=(\cosh(u^2),\sinh(u^2),0)$, $\mathbf{x}_2=(u^1\sinh(u^2),u^1\cosh(u^2),c)$, and $\mathbf{x}_1\times_1\mathbf{x}_2=(c\sinh(u^2),c\cosh(u^2),u^1)$. Then, $S$ is admissible with metric
$\mathrm{I}=(\mathrm{d}u^1)^2-(u^1)^2(\mathrm{d}u^2)^2\,.$ In addition, $\mathbf{x}_{11}=(0,0,0)$, $\mathbf{x}_{12}=(\sinh(u^2),\cosh(u^2),0)$, $\mathbf{x}_{22}=(u^1\cosh(u^2),u^1\sinh(u^2),0)$, and then $\mathrm{II}=-(c/u^1)\mathrm{d}u^1\mathrm{d}u^2\,.$
The Gaussian and mean curvatures are
\begin{equation}
K = \frac{c^2}{(u^1)^4}\,\mbox{ and }\,H=0\,.
\end{equation}
Finally, the shape operator is not diagonalizable, over $\mathbb{R}$, since $H^2-K<0$. We may say that the principal curvatures are complex.
\qed
\end{example}
\begin{example}[$H^2-K\geq0$: $L_q$ diagonalizable]
Let $S\subset\mathbb{I}_{\mathrm{p}}^3$ be the revolution surface {$
\mathbf{x}(u^1,u^2)=(u^1\cosh(u^2),u^1\sinh(u^2),z(u^1)),\,u^1>0.$} We have $\mathbf{x}_1=(\cosh(u^2),\sinh(u^2),z')$, $\mathbf{x}_2=(u^1\sinh(u^2),u^1\cosh(u^2),0)$, and $\mathbf{x}_1\times_1\mathbf{x}_2=(-u^1z'\cosh(u^2),-u^1z'\sinh(u^2),u^1)$. Then, $S$ is admissible with metric
$\mathrm{I}=(\mathrm{d}u^1)^2-(u^1)^2(\mathrm{d}u^2)^2\,.$ In addition, we have $\mathbf{x}_{11}=(0,0,z'')$, $\mathbf{x}_{12}=(\sinh(u^2),\cosh(u^2),0)$, and $\mathbf{x}_{22}=(u^1\cosh(u^2),u^1\sinh(u^2),0)$. Consequently, $\mathrm{II}=z''(u^1)\,(\mathrm{d}u^1)^2-u^1z'(u^1)\,(\mathrm{d}u^2)^2\,.$
The Gaussian and mean curvatures are
\begin{equation}
K = \frac{z'z''}{u^1}\,\mbox{ and }\,H=\frac{z''}{2}+\frac{z'}{2u^1}\,.
\end{equation}
Finally, $H^2-K=(z''/2-z'/2u^2)^2\geq0$. Equality occurs only for $z(u^1)=c_0(u^1)^2+c_1$, i.e., when $S$ is a sphere of parabolic type and, therefore, $L_q$ is diagonalizable. Otherwise, $H^2-K>0$ and $L_q$ is also diagonalizable.
\qed
\end{example}

\subsection{Relative differential geometry in pseudo-isotropic space}
\label{subsect::RelDGpseudoIsoSpc}

As in $\mathbb{I}^3$, the Christoffel symbols $\Gamma_{ij}^k$ {come from $\mathbf{x}_{ij} = \Gamma_{ij}^k\mathbf{x}_k +h_{ij}\,\mathcal{N}$,} where $\mathcal{N}=(0,0,1)$ is the normal to $S$ according to $\langle\cdot,\cdot\rangle_{pz}$. Now, from the equality $\mathbf{x}_{ij,k}-\mathbf{x}_{ik,j}=0$ in $\mathbb{R}^3$,  the pseudo-isotropic Gauss and Codazzi-Mainardi equations associated with the induced pseudo-isotropic metric are respectively
\begin{equation}
 \Gamma_{ij,k}^{\ell}-\Gamma_{ik,j}^{\ell}+\Gamma_{ij}^s\Gamma_{ks}^{\ell}-\Gamma_{ik}^s\Gamma_{js}^{\ell}= 0 \mbox{ and }
h_{ij,k}-h_{ik,j}+\Gamma_{ij}^{\ell}\,h_{\ell k}- \Gamma_{ik}^{\ell}\,h_{\ell j}=0. \label{eq::AbsPseudoGaussianCurvTensor}
\end{equation}
In analogy to what happens in $\mathbb{I}^3$, {the first expression in} Eq. (\ref{eq::AbsPseudoGaussianCurvTensor}) implies that the intrinsic (or absolute) Gaussian curvature associated with $\langle\cdot,\cdot\rangle_{pz}$ vanishes for any  $S\subset \mathbb{I}_{\mathrm{p}}^3$.

Now, let us introduce a new connection on $\mathbb{I}_{\mathrm{p}}^3$ whose coefficients $\Xi_{ij}^k$ {come from}
\begin{equation}
\mathbf{x}_{ij} = \Xi_{ij}^k\,\mathbf{x}_k+\rho_{ij}\,\xi^{\mathrm{p}}\,.\label{def::PseudoRelCoeffXi}
\end{equation}
The coefficient $\rho_{ij}$ is unequivocally defined whenever $\mathbf{x}_1\times_1\mathbf{x}_2$ is not lightlike in the background metric $\langle\cdot,\cdot\rangle_1$. Indeed, using Eq. (\ref{def::DefXandXij}) and $\xi^{\mathrm{p}}$ in Eq. (\ref{def::PseudoIsoGaussMap}),
\begin{equation}
\langle\mathbf{x}_1\times_1\mathbf{x}_2,\xi^{\mathrm{p}}\rangle_1=\langle(X_{23},X_{13},X_{12}),\xi^{\mathrm{p}}\rangle_1=\frac{(X_{23})^2-(X_{13})^2+(X_{12})^2}{2X_{12}}.
\end{equation}

We shall call a point $q\in S$ where $\Vert\mathbf{x}_1(q)\times_1\mathbf{x}_2(q)\Vert_1=0$ a \emph{lightlike point} of $S$. This notion should be not confused with the one coming from the induced metric since $g(\cdot,\cdot):=\langle\cdot,\cdot\rangle_{pz}\vert_{\,T_qS}$ is always timelike, as shown in Prop. \ref{prop::PseudoIsoSurfAreTimelike}.

{As in $\mathbb{I}^3$, here the coefficients $\rho_{ij}$ relate to $h_{ij}$ by
$h_{ij} = \rho_{ij}(\Vert\tilde{\xi}\Vert_1^2+\xi^3)$} and then
{$
\rho_{ij} = \frac{h_{ij}}{\Vert\tilde{\xi}\Vert_1^2+\xi^3}\,\Rightarrow \rho_{ij}=\rho_{ji}\mbox{ and }\Xi_{ij}^k=\Xi_{ji}^k$.} Observe that $2(\Vert\tilde{\xi}\Vert_1^2+\xi^3)=[(X_{23})^2-(X_{13})^2+(X_{12})^2)](X_{12})^{-2}\not=0$ outside the set of lightlike points. In this case, there is no singularity in the expression for $\rho_{ij}$. With a proof analogous to that of surfaces in $\mathbb{I}^3$, we can show that
\begin{proposition}
The coefficients $\Xi_{ij}^k$ relate to $\Gamma_{ij}^k$ according to 
\begin{equation}
\Xi_{ij}^k = \Gamma_{ij}^k + g^{k\ell}x_{\ell}^3\,\rho_{ij}{=\Gamma_{ij}^k + g^{k\ell}x_{\ell}^3\,h_{ij}(\Vert\tilde{\xi}\Vert_1^2+\xi^3)^{-1}.}
\end{equation}
\end{proposition}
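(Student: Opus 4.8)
The plan is to follow essentially verbatim the argument given for the simply isotropic proposition, replacing the Euclidean top-view product by its Lorentzian counterpart $\langle\cdot,\cdot\rangle_{pz}$ and the Gauss map $\xi$ by $\xi^{\mathrm{p}}$. First I would write the induced metric coefficients as $g_{ab}=\langle\tilde{\mathbf{x}}_a,\tilde{\mathbf{x}}_b\rangle_{pz}$ and differentiate with respect to $u^c$, obtaining $g_{ab,c}=\langle\tilde{\mathbf{x}}_{ac},\tilde{\mathbf{x}}_b\rangle_{pz}+\langle\tilde{\mathbf{x}}_a,\tilde{\mathbf{x}}_{bc}\rangle_{pz}$. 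Taking the top view of the defining relation (\ref{def::PseudoRelCoeffXi}), namely $\tilde{\mathbf{x}}_{ac}=\Xi_{ac}^d\tilde{\mathbf{x}}_d+\rho_{ac}\tilde{\xi}^{\mathrm{p}}$, and substituting would yield
\[ g_{ab,c}=\Xi_{ac}^d g_{db}+\Xi_{bc}^d g_{ad}+\rho_{ac}\langle\tilde{\xi}^{\mathrm{p}},\tilde{\mathbf{x}}_b\rangle_{pz}+\rho_{bc}\langle\tilde{\mathbf{x}}_a,\tilde{\xi}^{\mathrm{p}}\rangle_{pz}. \]

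The key step, exactly as in $\mathbb{I}^3$, is to eliminate the $\rho$-terms by an orthogonality relation. Since $N_h=(\mathbf{x}_1\times_1\mathbf{x}_2)/\Vert\tilde{\mathbf{x}}_1\times_1\tilde{\mathbf{x}}_2\Vert_1$ equals $\tilde{\xi}^{\mathrm{p}}+\mathbf{e}_3$ by (\ref{eq::DefPseudoNh}), and the Lorentzian cross product $\mathbf{x}_1\times_1\mathbf{x}_2=(X_{23},X_{13},X_{12})$ is $\langle\cdot,\cdot\rangle_1$-orthogonal to the tangent plane, for every $w_q\in T_qS$ one gets $0=\langle N_h,w_q\rangle_1=\langle\xi^{\mathrm{p}},w_q\rangle_{pz}+w_q^3$, because the restriction of $\langle\cdot,\cdot\rangle_1$ to top views coincides with $\langle\cdot,\cdot\rangle_{pz}$ while the $\mathbf{e}_3$ contribution gives $w_q^3$. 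Choosing $w_q=\mathbf{x}_b$ then yields $\langle\tilde{\xi}^{\mathrm{p}},\tilde{\mathbf{x}}_b\rangle_{pz}=-x_b^3$, so that
\[ g_{ab,c}=\Xi_{ac}^d g_{db}+\Xi_{bc}^d g_{ad}-\rho_{ac}x_b^3-\rho_{bc}x_a^3. \]

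With this identity in hand I would form the standard combination $g_{i\ell,j}+g_{\ell j,i}-g_{ij,\ell}$, use the symmetry $\Xi_{ab}^c=\Xi_{ba}^c$ (valid since $\rho_{ij}=\rho_{ji}$, established just before the statement) to cancel the redundant terms, and contract with $g^{k\ell}/2$. The symmetric part reproduces the pseudo-isotropic Christoffel symbol $\Gamma_{ij}^k=\tfrac{g^{k\ell}}{2}(g_{i\ell,j}+g_{\ell j,i}-g_{ij,\ell})$, while the remaining $\rho$-terms collapse to $g^{k\ell}x_\ell^3\rho_{ij}$, giving $\Xi_{ij}^k=\Gamma_{ij}^k+g^{k\ell}x_\ell^3\rho_{ij}$; substituting the expression $\rho_{ij}=h_{ij}(\Vert\tilde{\xi}\Vert_1^2+\xi^3)^{-1}$ recalled above yields the second equality.

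The only genuine subtlety — and the step I expect to require the most care — is the orthogonality relation of the second paragraph: I must verify explicitly that $(X_{23},X_{13},X_{12})$ is $\langle\cdot,\cdot\rangle_1$-orthogonal to both $\mathbf{x}_1$ and $\mathbf{x}_2$, and track the signs in the Lorentzian product so that the top-view restriction genuinely reproduces $\langle\cdot,\cdot\rangle_{pz}$. Beyond this, the manipulations are formally identical to the $\mathbb{I}^3$ case; the inverse $g^{k\ell}$ exists because the induced metric is non-degenerate of index $1$ on any admissible surface by Prop.~\ref{prop::PseudoIsoSurfAreTimelike}, and the quotient defining $\rho_{ij}$ is legitimate precisely away from lightlike points, where $\Vert\tilde{\xi}\Vert_1^2+\xi^3\neq0$.
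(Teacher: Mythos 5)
Your proposal is correct and is exactly the route the paper intends: the paper gives no separate proof here, stating only that the argument is ``analogous to that of surfaces in $\mathbb{I}^3$,'' and your adaptation—replacing $\langle\cdot,\cdot\rangle_z$ by $\langle\cdot,\cdot\rangle_{pz}$, using $N_h=\tilde{\xi}^{\mathrm{p}}+\mathbf{e}_3$ and the $\langle\cdot,\cdot\rangle_1$-orthogonality of $\mathbf{x}_1\times_1\mathbf{x}_2=(X_{23},X_{13},X_{12})$ to get $\langle\tilde{\xi}^{\mathrm{p}},\tilde{\mathbf{x}}_b\rangle_{pz}=-x_b^3$, then running the Koszul-type combination—is precisely that analogy carried out, with the correct caveat that everything holds away from lightlike points where $\Vert\tilde{\xi}\Vert_1^2+\xi^3\neq0$.
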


The Gauss-Codazzi-Mainardi equations associated with $\Xi_{ij}^k$ in $\mathbb{I}_{\mathrm{p}}^3$ are analogous to the simply isotropic ones obtained in subsect. \ref{subsec::RelativeGauss-Codazzi-MainardiEqs}. They allow us to reinterpret the Gaussian curvature, Eq. (\ref{def::IsoGaussAndMeanCurv}), as an intrinsic curvature.

\begin{example}[r-geodesics on a sphere of parabolic type]
\label{Exe::r-GeodOnAPseudoIsoSphere}
Let $S$ be the sphere of parabolic type $\Sigma^2(p)=\{z=\frac{p}{2}-\frac{1}{2p}(x^2-y^2)\}$ centered at the origin. Its $r$-geodesics can be obtained by intersections with a plane passing through the origin. Indeed, the intersection $S\cap\Pi_{a,b}=\{z=-ax+by\}$ is the curve
\begin{equation}
\gamma(t)=p\left(R\cosh\theta+a,R\sinh\theta+b,-a^2+b^2-R[a\cosh\theta-b\sinh\theta]\right),
\end{equation}
where $\theta=\theta(t)$ and $R^2=1+a^2-b^2>0$ (\footnote{Indeed, if $(x^*,y^*,z^*)\in\Pi_{a,b}\cap S$, then from $-ax^*+bx^*=p/2-[(x^*)^2-(y^*)^2]/2p$ we find $(x^*-ap)^2-(y^*-bp)^2=p^2(1+a^2-b^2)$.}): if $R^2=1+a^2-b^2<0$, then
\begin{equation}
\gamma(t)=p\left(R\sinh\theta+a,R\cosh\theta+b,a^2-b^2+R[a\sinh\theta-b\cosh\theta]\right).
\end{equation}
When $R=0$, the intersection is a pair of lines, which are $r$-geodesics. To have $\gamma''\parallel \gamma/p$, $\xi^{\mathrm{p}}\circ\gamma=\gamma/p$, it is enough to find a function $\theta(t)$ such that $\gamma\times_1\gamma''=0$. The resulting equations can be managed in a similar fashion to those of $\mathbb{I}^3$, example \ref{Exe::r-GeodOnASphere}, by using the hyperbolic trigonometric functions instead of the usual ones. If $\Pi$ is isotropic and passes through the origin, we can also proceed as in example \ref{Exe::r-GeodOnASphere}.

In short, the intersections of $S$ with  planes passing through the origin can describe all $r$-geodesics on a pseudo-isotropic sphere of parabolic type.
\qed
\end{example}

\begin{acknowledgements}
The author would like to thank M. E. Aydin (Firat University) for useful discussions and the Departamento de Matem\'atica, Universidade Federal de Pernambuco (Recife, Brazil), where this research initiated when da Silva was a temporary lecturer.
\end{acknowledgements}

\textbf{Conflict of interest}
On behalf of all authors, the corresponding author states that there is no conflict of interest

\end{document}